\newtheorem{theorem}{Theorem}[section]
\newtheorem{lemma}[theorem]{Lemma}
\newtheorem{assumption}[theorem]{Assumption}
\theoremstyle{definition}
\newtheorem{definition}[theorem]{Definition}
\theoremstyle{remark}
\numberwithin{equation}{section}
\newcommand{\Pbk}{\mathbb{P}_{k-1}}
\newcommand{\Pbkd}{\mathbb{P}_{k-1/2}}
\newcommand{\R}{\mathbb{R}}
\newcommand{\N}{\mathbb{N}}
\newcommand{\Ek}{\mathbb{E}_{k-1}}
\newcommand{\Predk}{\operatorname{Pred}_k}
\newcommand{\Aredk}{\operatorname{Ared}_k}
\newcommand{\expect}{\mathbb{E}}
\newcommand{\nomepunto}{{\sc irerm}}
\newcommand{\nome}{{\sc irerm }}
\def \indic{{\mathbbm 1}}
\begin{document}

\title[Inexact Restoration via random models]{Inexact Restoration via random models for unconstrained noisy optimization 
}

%    Information for first author
\author{Benedetta Morini}
%    Address of record for the research reported here
\address{Dipartimento di Ingegneria Industriale,
    Universit\`{a} degli Studi di Firenze, Via G.B. Morgagni 40, 50134, Firenze, Italy.}
%    Current address
%\curraddr{Department of Mathematics and Statistics,
%Case Western Reserve University, Cleveland, Ohio 43403}
\email{benedetta.morini@unifi.it}
%    \thanks will become a 1st page footnote.

%    Information for second author
\author{Simone Rebegoldi}
\address{Dipartimento di Scienze Fisiche, Informatiche e Matematiche, Universit\`a di Modena e Reggio Emilia, Via Campi 213/b, 41125, Modena, Italy.}
\email{simone.rebegoldi@unimore.it}
\thanks{ The authors are members of the INdAM Research Group GNCS. Benedetta Morini and Simone Rebegoldi are partially supported by INDAM-GNCS through Progetti di Ricerca 2025 (CUP E53C24001950001). Benedetta Morini and Simone Rebegoldi are partially supported by PNRR - Missione 4 Istruzione e Ricerca - Componente C2 Investimento 1.1, Fondo per il Programma Nazionale di Ricerca e Progetti di Rilevante Interesse Nazionale (PRIN) funded by the European Commission under the NextGeneration EU programme, project ``Advanced optimization METhods for automated central veIn Sign detection in multiple sclerosis from magneTic resonAnce imaging (AMETISTA)'',  code: P2022J9SNP, MUR D.D. financing decree n. 1379 of 1st September 2023 (CUP E53D23017980001). Benedetta Morini is partially supported by PNRR - Missione 4 Istruzione e Ricerca - Componente C2 Investimento 1.1, Fondo per il Programma Nazionale di Ricerca e Progetti di Rilevante Interesse Nazionale (PRIN) funded by the European Commission under the NextGeneration EU programme, project ``Numerical Optimization with Adaptive Accuracy and Applications to Machine Learning'',  code: 2022N3ZNAX, MUR D.D. financing decree n. 973 of 30th June 2023 (CUP B53D23012670006), and by Partenariato esteso FAIR ``Future Artificial Intelligence Research'' SPOKE 1 Human-Centered AI. Obiettivo 4, Project ``Mathematical and Physical approaches to innovative Machine Learning technologies (MaPLe)'',  code: EP\_FAIR\_002, CUP B93C23001750006. 
}
%    General info
\subjclass[2020]{65K05, 90C30, 90C15}

\keywords{Trust-region algorithms, random models, Inexact Restoration.}

\begin{abstract}
We study the Inexact Restoration framework with random models for minimizing functions whose evaluation
is subject to errors. We propose a constrained formulation that includes well-known stochastic 
problems and an algorithm applicable when the evaluation of 
both the function and its gradient is random and a specified accuracy of such evaluations is guaranteed with sufficiently high probability. 
The proposed algorithm combines the Inexact Restoration framework with a trust-region methodology based on random first-order models.
 We analyse the properties of the algorithm and provide the expected number of iterations performed to reach an approximate first-order optimality point. Numerical experiments show that the proposed algorithm compares well with a state-of-the-art competitor. 
\end{abstract}

\maketitle

%\section*{This is an unnumbered first-level section head}
%This is an example of an unnumbered first-level heading.

%% The correct journal style for \specialsection is all uppercase; a known bug
%% in amsart.cls prevents this, so input must be uppercase until it is fixed.
%\specialsection*{This is a Special Section Head}
%\specialsection*{THIS IS A SPECIAL SECTION HEAD}
%This is an example of a special section head%
%%%%%%%%%%%%%%%%%%%%%%%%%%%%%%%%%%%%%%%%%%%%%%%%%%%%%%%%%%%%%%%%%%%%%%%%
%\footnote{Here is an example of a footnote. Notice that this footnote
%text is running on so that it can stand as an example of how a footnote
%with separate paragraphs should be written.
%\par
%And here is the beginning of the second paragraph.}%
%%%%%%%%%%%%%%%%%%%%%%%%%%%%%%%%%%%%%%%%%%%%%%%%%%%%%%%%%%%%%%%%%%%%%%%%
.

\section{Introduction}\label{sec1}
We consider unconstrained optimization problems of the form
\begin{equation}\label{eq:min}
\min_{x\in\mathbb{R}^n}f(x),
\end{equation}
where $f:\mathbb{R}^n\rightarrow \mathbb{R}$ is a smooth  function whose evaluation
is inexact. In particular, we are interested in problems where the evaluation of both the objective function
and its gradient is random and sufficient accuracy in the estimates can be guaranteed with sufficiently high probability \cite{CS, Spall}. 
This class of problems includes the minimization of either functions which are intrinsically noisy or expensive functions which are approximately computed, see, e.g., \cite{Bollapragada2018, Bottou, ChenMeniSche18}.

Our proposal for the numerical solution of this problem is based on the Inexact Restoration framework
for constrained optimization problems proposed in \cite{MP}. Inexact Restoration is
a two-phase method consisting of a restoration phase and an optimization phase; 
the restoration phase improves feasibility and does not call for function evaluations, whereas the optimization phase
improves the objective function value with respect to the point obtained in the restoration phase, see e.g., \cite{AEP, BM, BHM, FF}. Such a framework has been successfully employed in the minimization of functions whose computation 
is intrinsically inexact; specifically, the exact evaluation of the function is interpreted as feasibility of a
constrained problem and the restoration phase increases the accuracy in the function evaluation \cite{KM, BKM18, BKM20_1}. Analogously, Inexact Restoration has been fruitfully applied in finite-sum minimization with inexact functions 
and derivatives \cite{BKM20, Bellavia2023, BMR}.

In order to apply the Inexact Restoration approach, we reformulate (\ref{eq:min}) as  
\begin{equation}\label{eq:min_vinc}
\min_{x\in\mathbb{R}^n}\bar{f}(x,y, \omega_y)\quad \mbox{ subject to } h(y)=0,
\end{equation}
where $\bar{f}(x,y, \omega_y)$ is an estimate of $f(x)$   with noise level $y\in Y$, $\omega_y\in \Omega$ denotes a random variable dependent of $y$ and independent of $x$, and $h:Y\rightarrow \R$ is a nonnegative function such that
the lower $h(y)$, the higher the accuracy of the function and gradient estimates is in probability.
In the ideal case $h(y)=0$, the function estimate $\bar{f}(x,y, \omega_y)$ coincides with the exact objective value $f(x)$. %the function is evaluated exactly. 
We show that our problem setting is viable for well-known optimization problems and then 
design and analyze a new  trust-region algorithm that employs random first-order models for optimization. We borrow ideas 
from the Inexact Restoration approach and adapt such framework, in order to define an algorithm 
that is well-defined and  theoretically well founded even if the random models 
are inaccurate at some iterations. We denote our algorithm  as \nome (Inexact REstoration with Random Models).

Our work is different from existing works on the Inexact Restoration approach for the minimization of noisy functions  \cite{BLM, KM, BKM18, BKM20_1}, where either the noise is controllable in a deterministic way or the noise is stochastic but the probability 
of obtaining sufficiently accurate estimates of functions and gradients is increasing;  by contrast, we assume that the noise is stochastic and the probability of obtaining accurate estimates is prefixed.  A stochastic first-order trust-region algorithm with Inexact Restoration has been proposed in \cite{Bellavia2023}, but only for finite-sum minimization problems; by contrast, \nome can be applied to any unconstrained differentiable problem of the form \eqref{eq:min}.
Our algorithm falls in the class of trust-region procedures with random models, see e.g., the contributions
\cite{BSV, BGMT2, bcms,ChenMeniSche18,Larson2016}.  With respect to the existing literature, \nome is inspired by {\sc storm}, the trust-region algorithm with random models proposed in \cite{ChenMeniSche18} and further analysed in \cite{bcms}. Differently from {\sc storm}, the probabilistic accuracy requirements 
on function and gradient  estimates are explicitly imposed  within \nome using problem  (\ref{eq:min_vinc})
and a suitable function $h$.  Furthermore, the acceptance test in \nome is a stochastic counterpart of the rule typically adopted in the Inexact Restoration framework and involves both  $h$ and the noisy function $\bar{f}$, whereas  the acceptance test in 
{\sc storm} is a stochastic counterpart of the  classical trust-region acceptance test and involves  only the noisy values of $f$.
 
We design the new algorithm {\sc irerm} and  show that it shares theoretical properties analogous to those of  {\sc storm}.  Particularly, we provide a bound on the expected number of iterations needed to achieve a certain accuracy in the norm of the true gradient. Furthermore, we compare numerically {\sc irerm} with {\sc storm} on a collection of least-squares problems and observe that 
{\sc irerm}  achieves comparable or lower noiseless values of $f$ than {\sc storm}.

The rest of the paper is organized as follows. In Section \ref{S_PF} we 
discuss our constrained problem formulation (\ref{eq:min_vinc}) and  present fields of applications for our approach.
In Section \ref{S_algo} we introduce our procedure and in Section \ref{S_dim} we study its theoretical properties 
and provide the expected number of iterations performed to reach an approximate first-order optimality point for (\ref{eq:min}).
Finally, in Section \ref{S_exps} we provide the numerical validation of our new algorithm.

\section{Problem formulation}\label{S_PF}
In this section, we discuss the assumptions made on problem (\ref{eq:min}) and its constrained formulation (\ref{eq:min_vinc}) and show that they are viable.

For any $x\in\R^n$ and level of noise $y\in Y$, let $\bar{f}(x,y,\omega_y)$ and $\nabla_x \bar{f}(x,y,\omega_y)$ be random evaluations of $f(x)$ and $\nabla  f(x)$, respectively, where $\omega_y\in\Omega$ denotes a random variable from a probability space with probability measure $\mathbb{P}$, dependent of $y$ and independent of $x$, that models the noise on the function and its gradient. Regarding the constraint in \eqref{eq:min_vinc}, the function $h:Y\rightarrow \R$ is assumed to be non-negative. 
If $h(y)=0$, then   $\bar f(x,y, \omega_y)$ and $\nabla_x \bar{f}(x,y, \omega_y)$ coincide with $f(x)$ and $\nabla f(x)$, respectively.
If $h(y)$ is non-zero, its value is related to the level of accuracy that  $\bar f(x,y, \omega_y)$ and $\nabla_x \bar{f}(x,y, \omega_y)$ can  achieve in probability  
with respect to the corresponding exact evaluations. We summarize these assumptions in the following.

\vskip 5pt
\begin{assumption}\label{ass_fh}
\vskip 1pt \noindent
\begin{itemize}
\item[(i)] There exist a function $h:Y\rightarrow \R$ and $h_{up}\geq 0$ such that $0\leq h(y)\leq h_{up}$ for all $y\in Y$. 
If $h(y)=0$, then $\bar f(x,y,\omega_y)=f(x)$  for all $x\in \R^n$.
\item[(ii)]  Given $x\in\R^n$, there exists a function $\bar{\mu}:(0,1]\rightarrow \R$,  $\bar{\mu}(\alpha)\in [0, \bar{\mu}_{up}]$ for all $\alpha\in(0,1]$  and for some $\bar{\mu}_{up}>0$,  such that, if $y\in Y$ satisfies $h(y)\leq \bar{\mu}(\alpha)\rho$, with $\alpha\in (0,1]$, $\rho>0$, then it holds
\begin{equation}\label{eq:prob_requirements}
\mathbb{P}(|f(x)- \bar f(x,y,\omega_y)|\le \rho)\ge \alpha, \quad \mathbb{P}(\|\nabla f(x)- \nabla_x \bar{f}(x,y,\omega_y)\|\le 
\rho)\ge \alpha.
\end{equation}
\item[(iii)] Functions $f(x)$ and $\bar{f}(x,y,\omega_y)$ are bounded from below, i.e., there exists $f_{low}$ such that
\begin{align*}
f(x)&\ge f_{low}, \quad \forall \ x\in\R^n\\
\bar f(x,y,\omega_y)&\ge f_{low}, \quad \forall \ x\in\R^n, \ \forall \ y\in Y, \ \forall  \ \omega_y\in \Omega.
\end{align*}
\end{itemize}

\end{assumption}
\vskip 5pt
Our problem setting includes, but it is not limited to,  the following three settings of stochastic noise in unconstrained optimization.

First, in finite-sum minimization problems, where the objective function in \eqref{eq:min} has the form
\begin{equation}\label{eq:finite-sum}
f(x)=\frac 1 N \sum_{i=1}^N \phi_i(x),
\end{equation}
and $N$ is a large positive integer, 
approximations of $f$ and $\nabla f$ can be computed by sampling. This amounts to
fixing the sample size $y$ as an integer in $Y=\{1,\ldots,N\}$, choosing the sample set of indices $I_y \subset
\{1, \ldots, N\}$ of cardinality $y$ at random, and computing
\begin{equation}\label{eq:approximations1}
\bar f(x,y,\omega_y)=\frac{1}{y}\sum_{i\in I_{y}} \phi_i(x), \quad 
\nabla_x  \bar{f}(x,y,\omega_y)=
\frac{1}{ y}\sum_{i\in I_{y}} \nabla \phi_i(x).
\end{equation}
By an abuse of notation, here  $\omega_y$  is a realization of the noise, represented by the random selection of a subset $I_{y}$ of cardinality $y$ containing the sample indices. Such evaluations provide the exact values $f(x)$ and $\nabla f(x)$ if $y=N$, while they are sufficiently 
accurate in probability if $y\in \{1, \ldots,N-1\}$  is large enough \cite[\S 6]{Tropp}.
Thus, problem \eqref{eq:min} with (\ref{eq:finite-sum}) as objective function can be reformulated 
as  \eqref{eq:min_vinc} with  
\begin{equation*}
h(y)=\begin{cases}
\sqrt{\frac{N}{y}}, \quad &\text{if }1\le y\le N-1\\
0, \quad &\text{if } y=N.
\end{cases}
\end{equation*}
Regarding the fulfillment of Assumption \ref{ass_fh}(ii), 
we assume that the noise in function  evaluations is unbiased and we 
let  $\operatorname{Var}_{\omega_y}[\bar{f}(x,y,\omega_y)]\leq V<\infty$.
Then, given $\rho>0$, we choose $y$ such that 
\begin{equation}\label{eq:h2}
h(y)\leq \sqrt{\frac{N(1-\alpha)}{V}}\rho.
\end{equation}
If $y=N$, such condition is satisfied for any $\rho>0$ being $h(N)=0$, 
and the probabilistic inequalities in \eqref{eq:prob_requirements} trivially hold.
If $y<N$, condition (\ref{eq:h2}) is equivalent to
\begin{equation}\label{eq:y}
y\geq \frac{V}{(1-\alpha)\rho^2}.
\end{equation}
and $\bar{f}(x,y,\omega_y)$  satisfies \eqref{eq:prob_requirements}, see \cite[\S 6]{Tropp}. The same reasoning applies to the evaluation of  $\nabla_x\bar{f}(x,y,\omega_y)$ 
letting $V$ be the upper bound for the variance $\operatorname{Var}_{\omega_y}[ \nabla_{x}\bar{f}(x,y,\omega_y)]$. Thus, Assumption \ref{ass_fh}(ii) holds with $\bar{\mu}(\alpha)=\sqrt{N(1-\alpha)/V}$ and  $\bar \mu_{up}=\sqrt{N/V}$.

Second, consider the case where the objective function in \eqref{eq:min} is the expectation of a 
stochastic function that depends on a random  variable $\xi$, i.e.,
\begin{equation}\label{eq:stochastic}
f(x)=\expect_{\xi}[f(x,\xi)], \, \quad \forall \ x\in\R^n.
\end{equation}
Sample averaging approximations for 
reducing the variance on the evaluations can be used, and we set
\begin{eqnarray}
\bar{f}(x,y,\omega_y) &=& \begin{cases}
f(x), \quad &\text{if }y=0\\
\displaystyle \frac{1}{p(y)}\sum_{i=1}^{p(y)}f(x,\xi_i), \quad &\text{if }y\in(0,1],
\end{cases} \label{eq:approximations3}
\\
\nabla_x \bar{f}(x,y,\omega_y)&=&\begin{cases}
\nabla f(x), \quad &\text{if }y=0\\
\displaystyle \frac{1}{p(y)}\sum_{i=1}^{p(y)}\nabla _xf(x,\xi_i), \quad &\text{if }y\in(0,1],
\end{cases}\label{eq:approximations2}
\end{eqnarray}
where the symbol $p(y)=\lceil 1/y\rceil\in \mathbb{N}$ represents the number of samples for a given $y\in (0,1] $ and $\omega_y$ represents the i.i.d.~noise realizations of $\xi$, $i=1, \ldots, p(y)$. 
In this case, sufficient accuracy in probability is attained in the approximations 
$\bar{f}(x,y,\omega_y), \nabla_x \bar{f}(x,y,\omega_y)$ by choosing $y$  small enough \cite[\S 6]{Tropp}  
and problem (\ref{eq:min}) can be cast into (\ref{eq:min_vinc}), for instance by setting $Y=[0,1]$ and $h(y)=\sqrt{y}$,  $y\in Y$. 

As for Assumption \ref{ass_fh}(ii), we let  $\operatorname{Var}_{\omega_y}[\bar{f}(x,y,\omega_y)]\leq V<\infty$ 
for all $x\in\R^n$ and suppose that the noise is unbiased. Given $\rho>0$, choose $y\in Y$ such that
\begin{equation}\label{eq:hy1}
h(y)\leq \sqrt{\frac{1-\alpha}{V}}{\rho}.
\end{equation}
If $y=0$, then the above condition is a satisfied, and the first condition in \eqref{eq:prob_requirements} trivially holds, as $\bar{f}(x,y, \omega_y)=f(x)$ according to \eqref{eq:approximations3}. Otherwise, condition \eqref{eq:hy1} can be rewritten as
$ \frac{1}{y}\geq \frac{V}{(1-\alpha)\rho^2}$, which implies
\begin{equation}\label{eq:py}
p(y)\geq \frac{V}{(1-\alpha)\rho^2}.
\end{equation}

The estimator $\bar{f}(x,y,\omega_y)$ in \eqref{eq:approximations2} satisfies  the first probabilistic condition in 
\eqref{eq:prob_requirements} if  the sample size $p(y)$ satisfies \eqref{eq:py}, see 
\cite[\S 6]{Tropp}.
An analogous result holds for the second probabilistic condition in \eqref{eq:prob_requirements}, under the assumption that the noise in the gradient computation is unbiased, and the variance of the gradient estimate is upper bounded by $V$. Thus, Assumption \ref{ass_fh}(ii) holds with $ \bar{\mu}(\alpha) =\sqrt{(1-\alpha)/V} $, $\bar \mu_{up}=\sqrt{1/V}$.

Finally, problem (\ref{eq:min_vinc})  includes the least-squares problem
\begin{equation*}
f(x)=\frac{1}{2} \|F(x)\|_2^2,
\end{equation*}
associated to a nonlinear system $F(x)=0$  where both $F$ and its Jacobian are
computed via Monte Carlo simulations, see e.g.,  \cite{WCK}. Evaluations are made by using a number of trials as in (\ref{eq:approximations3}) and (\ref{eq:approximations2}) and enlarging the trial set increases the accuracy as in the case discussed above.

\section{An inexact restoration trust region method with random models}\label{S_algo}
In the following, we introduce our algorithm  named  \nome (Inexact REstoration with Random Models).
\nome is a stochastic variant of the classical trust-region algorithm, where suitable approximations of the objective function and its gradient are employed, and whose acceptance criterion relies on the decrease of a convex combination of the function estimate with the infeasibility measure $h$. 
Now we refer to the algorithm \nome stated below and provide a detailed  description of it.

\begin{algorithm}[h!]
{\bf Algorithm \nome: Inexact REstoration with Random Models}
\par\noindent\rule{\textwidth}{0.5pt}
\begin{flushleft}
{\bf Input:} $x_0\in\mathbb{R}^n$, $y_0\in Y$,  
$\eta_1\in(0,1)$, $\eta_2>0$, $\theta_0\in(0,1)$,  $\underline{\theta}\in (0,\theta_0)$, $r\in(0,1)$,  $\mu>0$,  $\gamma>1$, $\delta_{\max}>0$, $ \delta_0\in(0,\delta_{\max}]$.
\begin{itemize}[leftmargin=13pt]
\item[\bf 0.] {\bf Initialization} \item[] Set $k=0$.
\item[\bf 1.] {\bf Accuracy variables}\\
\item[] Choose  $\widetilde{y}_{k+1}$, $y_{k+1}^t$ and $y_{k+1}^g$ in $Y$ satisfying
\begin{equation}\label{updateh}
h(\widetilde{y}_{k+1})\le r   
h(y_{k}), \, \, h( {y}^t_{k+1})\le \mu \min\{\delta_k^2, h(y_{k})\},
\,\,  h(y_{k+1}^g)\le \mu \delta_k.
\end{equation}
\item[\bf 2.] {\bf Trust-region model}\\
\item[] Compute  
$\bar f_k^{\dagger}=\bar{f}(x_k, y_{k+1}^t,\omega_{y_{k+1}^t}^\dagger)$, $g_k=\nabla_{x} \bar{f}(x_k,y_{k+1}^g,\omega_{y_{k+1}^g})$, set 
\begin{equation*}
p_k=-\delta_k\frac{g_k}{\|g_k\|},
\end{equation*}
and build
$$m_k(p)= \bar f_k^{\dagger} +g_k^Tp.$$
\item[\bf 3.] {\bf Penalty parameter}\\
\item[]  Compute  $\bar f_k^*=\bar{f}(x_k,y_{k+1}^t,\omega_{y_{k+1}^t}^*)$ and evaluate $\Predk(\theta_k)$ as in \eqref{eq:pred}.
\\
\item[] Compute the penalty parameter $ \theta_{k+1}^t$ as
\begin{equation}\label{tkp1}
\theta_{k+1}^t=
\left\{
\begin{array}{ll}
\theta_k,  \quad & \mbox{if} \  \Predk(\theta_k)\ge\theta_k \delta_k\|g_k\|\\
\displaystyle  \frac{h(y_k)-h(\widetilde{y}_{k+1})}{  \bar f_k^{\, \dagger}-\bar f_k^*  +h(y_k)-h(\widetilde{y}_{k+1})}, \quad  &\mbox{otherwise}.
\end{array}
\right.
\end{equation}
\item[\bf 4.] {\bf Acceptance test}\\
\item[]  Compute  $\bar{f}_k^p = \bar{f}(x_k+p_k,y_{k+1}^t,\omega_{y_{k+1}^t}^p)$, evaluate $\Aredk(x_k+p_k,\theta_{k+1}^t)$ in \eqref{eq:ared} and $\Predk(\theta_{k+1}^t)$ as in \eqref{eq:pred}.\\
\item[] If $\Aredk(x_k+p_k,\theta_{k+1}^t)\geq \eta_1\Predk(\theta_{k+1}^t)$, $\|g_k\|\geq \eta_2\delta_k$ , $\theta_{k+1}^t\ge \underline \theta$   \textbf{(success)}, set
\begin{equation}\label{eq:succ}
x_{k+1} =x_k+p_k, \quad  y_{k+1} =y_{k+1}^t,  \quad 
\delta_{k+1} =\min\{\gamma \delta_k,\delta_{\max}\}, \quad \theta_{k+1}=\theta_{k+1}^t.
\end{equation}
Else \textbf{(unsuccess)} set 
\begin{equation}\label{eq:unsucc}
x_{k+1} =x_k, \quad y_{k+1} =y_{k}, \quad \delta_{k+1} = \delta_k/\gamma, \quad \theta_{k+1}=\theta_k.
\end{equation}
\item[\bf 5.] {\bf Iteration counter}
\item[] Set $k=k+1$ and go to Step 1.
\end{itemize}
\end{flushleft}
\end{algorithm}

At each iteration $k\geq 0$, we have at our disposal the iterate $x_k\in\R^n$, the accuracy variable $y_k\in Y$, the trust-region radius $\delta_k>0$, and the penalty parameter $\theta_k\in(0,1)$. 

At Step 1,  the accuracy variables $\widetilde{y}_{k+1}\in Y$, $y_{k+1}^t\in Y$, and $y_{k+1}^g\in Y$ are selected according to condition (\ref{updateh}). The variable $\widetilde{y}_{k+1}$ is chosen so that the value $h(\widetilde{y}_{k+1})$ is smaller than a fraction of the current value $h(y_k)$; the variable $y_{k+1}^t$ is selected so that $h(y_{k+1}^t)$ is upper bounded by a quantity proportional to the minimum value between $h(y_k)$ and the squared trust-region radius $\delta_k^2$; the variable $y_{k+1}^g$   is such that $h(y_{k+1}^g)$ is upper bounded by a quantity proportional to $\delta_k$.  As shown in the next section, condition (\ref{updateh}) is crucial to provide sufficiently accurate function and derivative approximations   in probability. 
We observe that the variable $\widetilde{y}_{k+1}$
improves feasibility and appears both in the computation of the parameter $\theta_{k+1}^t$ (Step 3) and in the evaluation of $\operatorname{Pred}_k(\theta_k)$ and $\operatorname{Pred}_k(\theta_{k+1}^t)$ (Steps 3-4); however, it is not actually used to compute function or gradient estimates. On the other hand, the variables $y_{k+1}^t$ and $y_{k+1}^g$ are employed for computing three function estimates and one gradient estimate, respectively, but do not  necessarily enforce a decrease of the infeasibility measure.  
   
At Step 2   the function estimate $\bar f_k^{\dagger}=\bar{f}(x_k,y_{k+1}^t,\omega_{y_{k+1}^t}^{\dagger})$ and the gradient estimate $g_k=\nabla_{x} \bar{f}(x_k,y_{k+1}^g,\omega_{y_{k+1}^g})$ are computed for  noise realizations $\omega_{y_{k+1}^t}^{\dagger}$ and $\omega_{y_{k+1}^g}$, respectively. Then the linear model $m_k(p)=\bar f_k^{\dagger}+g_k^Tp$ is formed, and finally the search direction $p_k$ is computed  by minimizing the model over the ball of center zero and radius $\delta_k$, equivalently
\begin{equation*}
p_k=\underset{\|p\|\leq\delta_k}{\operatorname{argmin}} \ m_k(p)=-\delta_k\frac{g_k}{\|g_k\|}.
\end{equation*}

At Step 3, we compute the  trial penalty parameter $ \theta_{k+1}^t$ by using the predicted reduction $\Predk(\theta)$ defined as
\begin{equation}\label{eq:pred}
\Predk{}(\theta)=\theta(\bar{f}_k^*-m_k(p_k))+(1-\theta)(h(y_k)-h( \widetilde{y}_{k+1})),
\end{equation} 
where $\theta\in(0,1)$  and $\bar{f}^*_k=\bar{f}(x_k,y_{k+1}^t,\omega_{y_{k+1}^t}^*)$ is a further function estimate {computed for the noise realization $\omega_{y_{k+1}^t}^*$}. More precisely, the parameter $\theta_{k+1}^t$ is computed so that the following condition holds
\begin{equation}\label{eq:pred_cond_general}
\Predk(\theta)  \ge { \theta} \delta_k \|g_k\|.
\end{equation}
If condition (\ref{eq:pred_cond_general}) is satisfied with $\theta=\theta_k$, then we set ${ \theta_{k+1}^t}=\theta_k$. Otherwise
$\theta_{k+1}^t$ is computed as the largest value for which inequality (\ref{eq:pred_cond_general}) holds and its value given in (\ref{tkp1}) is  derived in  Lemma \ref{lem:thetak}.

At Step 4, we accept or reject the trial point $x_k+p_k$.  Given a function estimate $\bar{f}_k^p = \bar{f}(x_k+p_k,y_{k+1}^t,\omega_{y_{k+1}^t}^p)$, we define the actual reduction at the trial point $x_k+p_k$ as
\begin{equation}\label{eq:ared}
\Aredk( x_k+p_k,\theta) =
\theta (\bar{f}_k^* -   \bar{f}_k^p)+(1-\theta)(h(y_k)-h( y^t_{k+1})).
\end{equation} 
Then,  the iteration is declared successful, i.e., the point $x_k+p_k$ is accepted,   whenever the following three conditions hold
\begin{align}
\Aredk(x_k+p_k,\theta_{k+1}^t)&\geq \eta_1 \Predk(\theta_{k+1}^t),\label{eq:accept1}\\
\|g_k\|&\geq \eta_2\delta_k,\label{eq:accept2}\\
\theta_{k+1}^t & \geq \underline{\theta} \label{eq:accept3}.
\end{align}
Condition (\ref{eq:accept1}) mimics the classical acceptance criterion of standard trust-region methods, whereas \eqref{eq:accept2}-\eqref{eq:accept3} are technical conditions on the norm of the gradient estimator and the penalty parameter needed for the  
theoretical analysis of Algorithm \nomepunto. If conditions \eqref{eq:accept1}-\eqref{eq:accept3} are all satisfied, we update the iterates $x_{k+1},y_{k+1},\delta_{k+1},\theta_{k+1}$ as in (\ref{eq:succ}) and proceed to the next iteration, otherwise we retain the  iterate $x_k$, update the parameters as in (\ref{eq:unsucc}) and proceed to the next iteration.

 Condition (\ref{eq:accept3}) is not standard for deterministic Inexact Restoration approaches and deserves some comments. Such a condition has been introduced in order to ensure that the sequence $\{\theta_k\}$ is bounded away from zero, allowing us to avoid certain technical assumptions, employed in the previous literature, that may not be satisfied in our settings. More precisely, in seminal deterministic algorithms such as  \cite{BKM18, BKM20_1, BLM}, it is proved that the sequence $\{\theta_{k}\} $ is bounded away from zero by enforcing a condition on the noisy function values along the iterations. Such a condition is problem dependent and hard to impose unless the function $\bar f(x,y,\omega)$ is Lipschitz continuous with respect to $y$ and $\omega$, which is hardly verifiable or satisfied if the function evaluations are random, as is in our case. 
Likewise, in the stochastic Inexact Restoration algorithms  [4--6] for finite-sum minimization, it is proved that $\{\theta_{k}\}$ is bounded away from zero assuming  that every subsampled function evaluation is uniformly bounded from below and above on a set containing the generated sequence $\{x_k\}$. For the sake of generality, we preferred to abandon such an assumption and enforce condition \eqref{eq:accept3} instead.

We conclude this section by comparing \nome with existing trust-region algorithms with random models.
First, consider the algorithm {\sc storm} given in \cite{bcms, ChenMeniSche18}. On the one hand, Step 1 in \nome enforces sufficient accuracy  in probability for the function and gradient estimates under a suitable choice  of the parameter $r$ (see the upcoming Lemma \ref{lem_prob}).
On the other hand, setting the noise level is not explicitly part of {\sc storm}. Furthermore,  the actual reduction (\ref{eq:ared})   used in Step 4  is a convex combination of the reduction in the function approximation from $x_k$ to $x_k+p_k$ and the reduction in the function $h$. Thus, \nome may accept iterations where no decrease occurs in function approximations, provided that there has been a sufficient improvement towards the accuracy constraint $h(y)=0$. Such an acceptance rule differs from that 
 in {\sc storm}, which is solely based on  function estimates.

Second, we highlight some important differences between \nome and {\sc sirtr}, the stochastic trust-region algorithm with Inexact Restoration proposed in \cite{Bellavia2023}.  Algorithm {\sc sirtr} is applicable only to finite-sum minimization problems and its update rule for the accuracy variable is merely based on feasibility restoration, while not ensuring the probabilistic accuracy requirements \eqref{eq:prob_requirements} for random functions and gradients. Consequently, it may  be necessary to further enlarge the sample size originated by the update rule of {\sc sirtr} in order to guarantee the worst-case complexity result in expectation of the algorithm \cite{BMR}.
On the contrary, {\sc irerm} is applicable  to the general problem (\ref{eq:min}) and adjusts the accuracy variables according to the probabilistic requirements \eqref{eq:prob_requirements}. 

A further relevant difference between {\sc irerm} and {\sc sirtr} lies in the presence of the parameter $\underline \theta$ in {\sc irerm}, which allows us to avoid the  strong assumption,  made in {\sc sirtr}, that every subsampled function evaluation is uniformly bounded from below and above on a set containing the generated sequence $\{x_k\}$. As a result, the predicted reduction $\operatorname{Pred}_k(\theta)$ and actual reduction $\operatorname{Ared}_k(x_k+p_k,\theta)$ have been also modified, by inserting an additional function evaluation $\bar{f}_k^*$ that is not present in {\sc sirtr} and ensures that $\theta_{k+1}^t\geq \underline{\theta}$ whenever the function and gradient evaluations are sufficiently accurate (see the upcoming Lemma \ref{lemma_bart}).

\section{Analysis of the proposed algorithm} \label{S_dim}
In this section, we analyze the theoretical properties of the stochastic process underlying Algorithm \nomepunto.  
Following \cite{ChenMeniSche18}, we denote:    $X_k$   the random  iterate with realization $x_k$; 
$ \tilde Y_k$, the  random variable  
with realization $\tilde y_k$;  $G_k$ the random gradient with realization $g_k$;
$ \Delta_k$ the random trust-region radius with realization $\delta_k$; $P_k$ the random 
step  with realization $p_k$; $\bar{F}_k^{\dagger}$, $\bar{F}_k^{*}$, $\bar{F}_k^p$ the random estimates of $f$ with realizations
$\bar f_k^{\dagger}$, $\bar{f}_k^*$, $\bar{f}_k^p$. We denote with $\mathbb{P}_{k-1}(\cdot)$ and $\mathbb{E}_{k-1}[\cdot]$ 
the probability and expected value conditioned to the  $\sigma$-algebra generated 
up to the beginning  of iteration $k$, i.e., the   $\sigma$-algebra generated  by $\bar{F}_0^\dagger,\bar{F}_0^*, \bar F_0^p, \ldots,\bar{F}_{k-1}^\dagger,\bar{F}_{k-1}^*,\bar{F}_{k-1}^p$,
and $G_0, \ldots, G_{k-1}$, and with 
$\mathbb{P}_{k-1/2}(\cdot)$ 
the  probability conditioned to the  $\sigma$-algebra generated   by $\bar{F}_0^\dagger,\bar{F}_0^*, \bar F_0^p, \ldots,\bar{F}_{k-1}^\dagger,\bar{F}_{k-1}^*,\bar{F}_{k-1}^p$,
and $G_0, \ldots, G_{k}$.

In order to show that our algorithm can reach a desired accuracy in the value of the true gradient
of $f$ we give an upper bound on the expected value of the hitting time defined below.
\vskip 5pt
\begin{definition}\label{Nepsilon}
Given $\epsilon>0$,  the hitting time $\mathcal{K}_{\epsilon}$ is the random variable
\begin{equation*}
	\mathcal{K}_{\epsilon}=\min\{k\geq 0: \ \|\nabla f(X_k)\|\leq \epsilon\},
\end{equation*}
i.e., $\mathcal{K}_{\epsilon}$ is the first iteration such that $\|\nabla f(X_{\mathcal{K}_{\epsilon}})\|\le \epsilon$.
\end{definition}
\vskip 5pt
 
We proceed as follows. First, we show some properties of the sequence $\{\theta_k\}$; second, we introduce conditions  which enforce successful iterations; third, we provide results
that allow us to rely on the theory given in \cite{bcms} and to derive the iteration complexity results.

\subsection{On the sequence $\{\theta_k\}$}
We study the properties of the sequence $\{\theta_k\}$ that will be crucial in our analysis.
 
\vskip 5pt
\begin{lemma}\label{lem:thetak}
Suppose Assumption \ref{ass_fh} holds. The following facts hold true.
\begin{itemize}
\item[(i)] The sequence $\{\theta_k\}$ is positive, nonincreasing, and $\theta_{k+1}^t\le \theta_k$, for all $k\geq 0$.
\item[(ii)] It holds 
\begin{equation}\label{eq:pred_cond}
\Predk(\theta_{k+1}^t)\geq \theta_{k+1}^t\delta_k\|g_k\|, \quad \forall \ k\geq 0.
\end{equation}
\item[(iii)] The sequence $\{\theta_k\}$ is bounded away from zero with $\theta_{k+1}\geq \underline{\theta}>0$, for all $k\geq 0$. 
\end{itemize} 
\end{lemma}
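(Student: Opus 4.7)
The plan is to handle the three claims sequentially, leaning on the explicit formula for $\theta_{k+1}^t$ in \eqref{tkp1} and on the update rules from Step 4. A preliminary simplification I would perform is to substitute $m_k(p_k) = \bar{f}_k^{\dagger} - \delta_k \|g_k\|$ (which follows directly from $p_k = -\delta_k g_k/\|g_k\|$) into \eqref{eq:pred}. The condition \eqref{eq:pred_cond_general} then reduces to
\[
\theta(\bar{f}_k^* - \bar{f}_k^{\dagger}) + (1-\theta)(h(y_k) - h(\widetilde{y}_{k+1})) \geq 0,
\]
and I would note once and for all that $h(y_k) - h(\widetilde{y}_{k+1}) \geq (1-r) h(y_k) \geq 0$ by \eqref{updateh} with $r \in (0,1)$.

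For part (i), I would split according to the two branches of \eqref{tkp1}. The first branch trivially gives $\theta_{k+1}^t = \theta_k$. In the second branch, the fact that the condition fails at $\theta_k$ forces $\bar{f}_k^{\dagger} > \bar{f}_k^*$; solving the reduced inequality above for the largest $\theta \in (0,1]$ making it tight produces exactly the closed-form expression in the second line of \eqref{tkp1}, and the strict inequality $\theta_{k+1}^t < \theta_k$ follows because $\theta_k$ lies outside the admissible interval. The monotonicity $\theta_{k+1} \leq \theta_k$ of the actual algorithmic sequence then drops out of \eqref{eq:succ}--\eqref{eq:unsucc}.

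For part (ii), in the first branch \eqref{eq:pred_cond} holds by definition of the branch, while in the second branch $\theta_{k+1}^t$ is constructed so that the reduced inequality holds with equality at $\theta = \theta_{k+1}^t$, which is equivalent to \eqref{eq:pred_cond}. For part (iii), I would argue by induction on $k$. The base case $\theta_0 > \underline{\theta}$ is built into the algorithm's input. For the induction step, on a successful iteration the acceptance test \eqref{eq:accept3} directly enforces $\theta_{k+1} = \theta_{k+1}^t \geq \underline{\theta}$, while on an unsuccessful iteration $\theta_{k+1} = \theta_k \geq \underline{\theta}$ by the induction hypothesis.

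The main obstacle I anticipate is the strict positivity of $\theta_{k+1}^t$ in the degenerate case $h(y_k) = 0$, since the closed form in the second branch of \eqref{tkp1} has a zero numerator in that case. I would handle this by observing that $h(y_k) = 0$ forces $h(y_{k+1}^t) = 0$ via \eqref{updateh}, which by Assumption \ref{ass_fh}(i) yields $\bar{f}_k^{\dagger} = \bar{f}_k^* = f(x_k)$. The reduced condition then collapses to $0 \geq 0$, so the first branch of \eqref{tkp1} is selected and $\theta_{k+1}^t = \theta_k > 0$. Once this degenerate case is out of the way, positivity in the generic case $h(y_k) > 0$ is immediate because the numerator is at least $(1-r)h(y_k) > 0$.
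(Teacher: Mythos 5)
Your proposal is correct and follows essentially the same route as the paper's proof: the same case split on the two branches of \eqref{tkp1}, the same resolution of the degenerate case $h(y_k)=0$ via \eqref{updateh} and Assumption \ref{ass_fh}(i), the observation that \eqref{eq:pred_cond_general} holds exactly for $\theta$ below the closed-form value in \eqref{tkp1}, and the same induction using \eqref{eq:accept3} for part (iii). Your upfront reduction of \eqref{eq:pred_cond_general} to $\theta(\bar{f}_k^*-\bar{f}_k^{\dagger})+(1-\theta)(h(y_k)-h(\widetilde{y}_{k+1}))\geq 0$ is only a cosmetic reorganization of the inequality the paper labels \eqref{tk}.
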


\begin{proof}
(i) We note that $\theta_0>0$, and proceed by induction assuming that $\theta_k>0$ for some $k\geq 0$. First, suppose that $h(y_k)=0$. By (\ref{updateh}), we have that $h(y_{k+1}^t)=h(\widetilde{y}_{k+1})=0$, {which by Assumption \ref{ass_fh}(i)} yields $\bar{f}_k^*= \bar f_k^{\, \dagger} =f(x_k)$. Thus, 
$\Predk(\theta_k)  = { \theta_k} \delta_k \|g_k\|$, the update rule \eqref{tkp1} 
gives $\theta_{k+1}^t=\theta_k$, and  $\theta_{k+1}^t$ is positive by induction.
Now suppose that $h(y_k)>0$. If inequality $\Predk(\theta_k) \ge  \theta_k \delta_k\|g_k\|$ holds, 
then the update rule \eqref{tkp1} gives $\theta_{k+1}^t=\theta_k$, and thus $\theta_{k+1}^t>0$ by induction. Otherwise, it must be
\begin{equation}\label{tk}
\theta_k  \left( \bar f_k^*-\bar f_k^{\, \dagger} -(h(y_k)-h( \widetilde{y}_{k+1} ))\right)   < -(h(y_k)-h( \widetilde{y}_{k+1}))<0,
\end{equation}
where the rightmost inequality follows from (\ref{updateh}). Since $\theta_k>0$, \eqref{tk} implies
\begin{equation}\label{eq:den_neg}
 \bar f_k^*-\bar f_k^{\, \dagger} -(h(y_k)-h( \widetilde{y}_{k+1} )<0
\end{equation}
and
\begin{equation}\label{eq:thetakk1}
0<\theta_{k+1}^*:= 
\frac{h(y_k)-h(\widetilde{y}_{k+1})}{\bar f_k^{\, \dagger} -\bar f_k^* +h(y_k)-h( \widetilde{y}_{k+1})} <\theta_k. 
\end{equation}
By the update rule \eqref{tkp1}, we have $\theta_{k+1}^t=\theta_{k+1}^*$, which is positive from \eqref{eq:thetakk1}. Since Step 4 assigns either $\theta_{k+1}=\theta_{k+1}^t$ or $\theta_{k+1}=\theta_k$, we conclude that $\theta_{k+1}$ is positive in both cases. Hence, the sequence $\{\theta_k\}$ is positive. Moreover, from \eqref{tkp1} and \eqref{eq:thetakk1}  we have $\theta_{k+1}^t\leq \theta_k$, and $\{\theta_k\}_{k\in\N}$ is nonincreasing by Step 4.

(ii) If inequality $\Predk(\theta_k) \ge  \theta_k \delta_k\|g_k\|$ holds, 
the update rule \eqref{tkp1} gives $\theta_{k+1}^t=\theta_k$, and thus \eqref{eq:pred_cond} trivially holds. Otherwise, the update rule \eqref{tkp1} gives $\theta_{k+1}^t=\theta_{k+1}^*$, which satisfies (\ref{eq:pred_cond}). Indeed, imposing 
 $\Predk(\theta )\ge { \theta} \delta_k\|g_k\|$ yields
$$
\theta  \left( \bar f_k^*-\bar f_k^{\, \dagger} -(h(y_k)-h( \widetilde{y}_{k+1} ))\right)   \ge  -(h(y_k)-h( \widetilde{y}_{k+1})).
$$
From \eqref{eq:den_neg}, it follows that $\theta$ must satisfy 
$$
\theta\le \theta_{k+1}^*= 
\frac{h(y_k)-h(\widetilde{y}_{k+1})}{\bar f_k^{\, \dagger} -\bar f_k^* +h(y_k)-h( \widetilde{y}_{k+1})},
$$
hence condition \eqref{eq:pred_cond} holds.

(iii) Note that $\theta_0> \underline{\theta}$ by the initial setting and suppose by induction that  $\theta_{k}\geq \underline{\theta}$. By \eqref{eq:succ} and \eqref{eq:unsucc}, it holds that  $\theta_{k+1}=\theta_{k+1}^t$ for successful iterations,  $\theta_{k+1}=\theta_{k}$ otherwise. In the 
former case, we have $\theta_{k+1}\geq \underline \theta $
due to the definition of successful iteration, in the latter case 
$\theta_{k+1}\geq \underline \theta $ by induction. Thus, the claim follows. 
\end{proof}

\subsection{Successful iterations}
In this section, we introduce conditions that enforce successful iterations, i.e.,  conditions 
which guarantee (\ref{eq:accept1})--(\ref{eq:accept3}).
The key issue is the characterization of sufficient accuracy  on $f$ and $\nabla f$. In the following definition, we formalize such accuracy requirements and call {\em true} an iteration where they hold, {\em false} otherwise.

\begin{definition}\label{true}   For a given iteration $k\geq 0$, let $x_k,g_k,\delta_k,x_k+p_k,y_k,y_{k+1}^t,\widetilde{y}_{k+1}, y_{k+1}^g$ be as in Algorithm \nomepunto. Then, given $\kappa>0$, we say that iteration  $k$ is true when 
\begin{equation}\label{eq:true1}
\begin{aligned}
|f(x_k)-\bar{f}_k^*| &\le  \kappa\min\{ \delta_k^2, h(y_k)\},\\
|f(x_k)-\bar{f}_k^{\, \dagger}|&\le  \kappa\min\{ \delta_k^2, h(y_k)\}, 
\end{aligned}
\end{equation}
and 
\begin{align}
\|\nabla f(x_k)-g_k\|&\le  \kappa \delta_k  ,\label{eq:true3}\\
|f(x_k+p_k)- \bar{f}_k^p|&\le  \kappa \delta_k^2.  \label{eq:true2}
\end{align}
\end{definition}
\noindent
Based on Section \ref{S_PF}, 
 we remark that true iterations occur with a certain probability if the level of noise is sufficiently small.  In the case of finite-sum minimization, iteration $k$  is true in probability  provided that the sample sizes used for computing $\bar f_k^*,\, \bar f_k^\dagger, \, g_k, \,  \bar f_k^p$ satisfy lower bounds of the form \eqref{eq:y};
 in the remaining cases of Section \ref{S_PF}, iteration $k$  is true in probability for specified  numbers of trials employed in the evaluation of $\bar f_k^*,\, \bar f_k^\dagger, \, g_k, \,  \bar f_k^p$ and complying with inequalities of the form \eqref{eq:py}.  By using Taylor expansion and assuming  Lipschitz continuity of $\nabla f$, the second condition in  (\ref{eq:true1}) and (\ref{eq:true3}) imply that model $m_k(p)= \bar f_k^{\dagger} +g_k^Tp$ is fully linear in a ball of center $x_k$ and radius $\delta_k$,  according to the definition of fully linear model\footnote{
Suppose $\nabla f$ is Lipschitz continuous and let $\kappa_{ef},\kappa_{eg}$ be some positive constants. A function $m_k$ is a $(\kappa_{ef},\kappa_{eg})$-fully linear model on $B=\{x: \|x-x_k\|\le \delta_k\}$ if $\forall y\in B$ 
$$\|\nabla f(y)-\nabla m_k(y)\|\le \kappa_{eg}\delta_k \ \ \mbox{ and }  \ \
|f(y)-m_k(y)|\le \kappa_{ef}\delta_k^2.$$
See e.g., \cite[Definition 3.1]{ChenMeniSche18}.
}.
 
Our first result concerns  condition (\ref{eq:accept3}).
\vskip 5pt
\begin{lemma}\label{lemma_bart}
Suppose Assumption \ref{ass_fh} holds. Let $k$ be an iteration  such that condition (\ref{eq:true1}) is satisfied with $ \kappa\leq (1-r-\underline{\theta})/(2\underline{\theta})$.  
Then, condition (\ref{eq:accept3}) holds.
\end{lemma}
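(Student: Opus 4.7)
The plan is to analyze the two branches of the update rule \eqref{tkp1} separately and reduce the claim to an elementary algebraic inequality involving $r$, $\underline\theta$, and $\kappa$.

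\textbf{Case 1: $\Predk(\theta_k)\ge\theta_k\delta_k\|g_k\|$.} In this case \eqref{tkp1} sets $\theta_{k+1}^t=\theta_k$. By Lemma \ref{lem:thetak}(iii) applied at iteration $k-1$ we have $\theta_k\ge\underline\theta$, so \eqref{eq:accept3} holds immediately.

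\textbf{Case 2: otherwise.} First I would argue that we must have $h(y_k)>0$: if $h(y_k)=0$, then \eqref{updateh} forces $h(\widetilde y_{k+1})=h(y_{k+1}^t)=0$, and by Assumption \ref{ass_fh}(i) this gives $\bar f_k^*=\bar f_k^{\dagger}=f(x_k)$, so $\Predk(\theta_k)=\theta_k\delta_k\|g_k\|$, placing us in Case 1. Next I would bound the numerator of \eqref{tkp1} from below via \eqref{updateh}:
\[
h(y_k)-h(\widetilde y_{k+1})\ge (1-r)h(y_k)>0,
\]
and bound $\bar f_k^{\dagger}-\bar f_k^*$ from above using the triangle inequality together with the first two estimates in \eqref{eq:true1}:
\[
\bar f_k^{\dagger}-\bar f_k^* \le |\bar f_k^{\dagger}-f(x_k)|+|f(x_k)-\bar f_k^*|\le 2\kappa\min\{\delta_k^2,h(y_k)\}\le 2\kappa h(y_k).
\]
As observed in the proof of Lemma \ref{lem:thetak}, the denominator $\bar f_k^{\dagger}-\bar f_k^*+h(y_k)-h(\widetilde y_{k+1})$ is strictly positive, so $\theta_{k+1}^t\ge\underline\theta$ is equivalent to
\[
(1-\underline\theta)\bigl(h(y_k)-h(\widetilde y_{k+1})\bigr)\ge \underline\theta\,(\bar f_k^{\dagger}-\bar f_k^*).
\]
Substituting the two bounds just derived (and cancelling the positive factor $h(y_k)$), it suffices to show
\[
(1-r)(1-\underline\theta)\ge 2\underline\theta\kappa.
\]
Using the hypothesis $\kappa\le (1-r-\underline\theta)/(2\underline\theta)$, the right-hand side is at most $1-r-\underline\theta$, and the required inequality reduces to
\[
(1-r)(1-\underline\theta)-(1-r-\underline\theta)=r\underline\theta\ge 0,
\]
which holds trivially.

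The argument is essentially linear algebra plus a careful enumeration of cases; the only potential pitfall is the degenerate situation $h(y_k)=0$, which must be ruled out before invoking the explicit formula for $\theta_{k+1}^t$. A secondary point worth stating once is that the hypothesis on $\kappa$ implicitly requires $\underline\theta<1-r$ for the bound to be non-vacuous, consistent with the parameter choices allowed in the Input of Algorithm \nomepunto.
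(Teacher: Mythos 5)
Your proof is correct and follows essentially the same route as the paper's: the same two-case split on the update rule \eqref{tkp1}, the same reduction to $h(y_k)>0$, and the same bounds $h(y_k)-h(\widetilde y_{k+1})\ge(1-r)h(y_k)$ and $|\bar f_k^{\dagger}-\bar f_k^*|\le 2\kappa h(y_k)$ from \eqref{updateh} and \eqref{eq:true1}. The only (immaterial) difference is that you cross-multiply to the form $(1-\underline\theta)(1-r)\ge 2\underline\theta\kappa$, whereas the paper bounds the denominator by $(1+2\kappa)h(y_k)$ and checks $(1-r)/(1+2\kappa)\ge\underline\theta$ directly; both follow from the stated hypothesis on $\kappa$.
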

\begin{proof}
Consider Step 3 of Algorithm \nomepunto. If $\theta_{k+1}^t=\theta_k$ in (\ref{tkp1}), 
Lemma \ref{lem:thetak}(iii) establishes that each $\theta_k$ is bounded below by $\underline{\theta}$ and then 
(\ref{eq:accept3}) holds. 
Otherwise,  by  the proof of Lemma \ref{lem:thetak}(i), we know that $h(y_k)>0$ and
$\bar{f}_k^{\, \dagger}-\bar{f}_k^* +h(y_k)-h( \widetilde{y}_{k+1}) >0$. Then, condition \eqref{eq:true1}  yields
$$
|\bar{f}_k^{\, \dagger}-\bar{f}_k^*|=|\bar{f}_k^{\, \dagger}-f(x_k)+ f(x_k)-\bar{f}_k^*|\le 2\kappa h(y_k) ,
$$
\begin{equation*}
0<\bar{f}_k^{\, \dagger}-\bar{f}_k^* +h(y_k)-h(\widetilde{y}_{k+1}) \le |\bar{f}_k^{\, \dagger}-\bar{f}_k^*|+ h(y_k)\le (1+2\kappa)h(y_k).
\end{equation*}
Then, by using  \eqref{tkp1} and (\ref{updateh}), we get
\begin{align*}
 \theta_{k+1}^t=\frac{h(y_k)-h(\widetilde{y}_{k+1})}{{\bar{f}_k^{\, \dagger}-\bar{f}_k^*} +h(y_k)-h(\widetilde{y}_{k+1})} 
\ge \frac{(1-r)h(y_k)}{(1+2\kappa)h(y_k)}
\ge \underline \theta,
\end{align*}
where the last inequality follows from the assumptions made on $\kappa$.
\end{proof}
\vskip 5pt
The next lemma mimics a standard result holding for deterministic trust-region methods; if 
the iteration is true and the trust-region is sufficiently small then the iteration is successful. We need the 
subsequent assumption on $f$.
\vskip 5pt
\begin{assumption}\label{ass_gradf}
The gradient of the function $f$ in (\ref{eq:min}) is Lipschitz continuous with constant $2L$.
\end{assumption}

\vskip 5pt
\begin{lemma}\label{lemma_succ}
Suppose Assumptions  \ref{ass_fh} and \ref{ass_gradf} hold, and that  iteration $k$ is true 
with $ \kappa\leq (1-r-\underline{\theta})/(2\underline{\theta})$. 
If the trust-region radius $\delta_k$ satisfies
\begin{equation}\label{eq:delta_succ}
\delta_k\leq \min\left\{   \frac{1}{\eta_2}\|g_k\|,\ \frac{\underline{\theta}(1-\eta_1)}{ \theta_0(3\kappa+L)+\mu} \|g_k\| \right\},
\end{equation}
then the iteration is successful.
\end{lemma}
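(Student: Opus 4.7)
The plan is to verify the three conditions (\ref{eq:accept1})--(\ref{eq:accept3}) that define a successful iteration. Two of these follow almost for free: condition (\ref{eq:accept2}), $\|g_k\|\ge \eta_2\delta_k$, is exactly the first bound in (\ref{eq:delta_succ}), and condition (\ref{eq:accept3}), $\theta_{k+1}^t\ge\underline\theta$, follows from Lemma~\ref{lemma_bart} since $\kappa$ satisfies the required bound. So the real content is proving the trust-region-like condition (\ref{eq:accept1}), $\Aredk(x_k+p_k,\theta_{k+1}^t)\ge \eta_1\Predk(\theta_{k+1}^t)$.

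For this, I would first write
\[
\Aredk(x_k+p_k,\theta_{k+1}^t)-\Predk(\theta_{k+1}^t)=\theta_{k+1}^t\bigl(m_k(p_k)-\bar{f}_k^p\bigr)+(1-\theta_{k+1}^t)\bigl(h(\widetilde{y}_{k+1})-h(y^t_{k+1})\bigr),
\]
and bound the two contributions separately. For the function term, I would insert $\pm f(x_k)$ and $\pm f(x_k+p_k)$ and decompose
\[
m_k(p_k)-\bar{f}_k^p=\bigl(\bar{f}_k^{\,\dagger}-f(x_k)\bigr)+\bigl(f(x_k)-f(x_k+p_k)+g_k^Tp_k\bigr)+\bigl(f(x_k+p_k)-\bar{f}_k^p\bigr).
\]
The first and third pieces are bounded by $\kappa\delta_k^2$ using the true-iteration estimates (\ref{eq:true1}) and (\ref{eq:true2}); the middle piece, after inserting $\pm\nabla f(x_k)^Tp_k$, is controlled by $\|g_k-\nabla f(x_k)\|\,\delta_k\le \kappa\delta_k^2$ together with the standard Lipschitz remainder $L\|p_k\|^2=L\delta_k^2$ from Assumption~\ref{ass_gradf} and $\|p_k\|=\delta_k$. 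This yields $|m_k(p_k)-\bar{f}_k^p|\le (3\kappa+L)\delta_k^2$. For the infeasibility term, nonnegativity of $h$ gives $h(\widetilde{y}_{k+1})-h(y^t_{k+1})\ge -h(y^t_{k+1})\ge -\mu\delta_k^2$ by (\ref{updateh}).

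Combining these and using $\theta_{k+1}^t\le \theta_0$ and $1-\theta_{k+1}^t\le 1$ gives
\[
\Aredk(x_k+p_k,\theta_{k+1}^t)-\Predk(\theta_{k+1}^t)\ge -\bigl[\theta_0(3\kappa+L)+\mu\bigr]\delta_k^2.
\]
To close the argument, I would invoke Lemma~\ref{lem:thetak}(ii), which gives $\Predk(\theta_{k+1}^t)\ge \theta_{k+1}^t\delta_k\|g_k\|\ge \underline\theta\,\delta_k\|g_k\|$ (using condition (\ref{eq:accept3}) already proved). Rewriting (\ref{eq:accept1}) as $\Predk-\Aredk\le (1-\eta_1)\Predk$, it suffices to check that $[\theta_0(3\kappa+L)+\mu]\delta_k^2\le (1-\eta_1)\underline\theta\,\delta_k\|g_k\|$, which is precisely the second bound in (\ref{eq:delta_succ}).

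The only place that requires any care is the decomposition of $m_k(p_k)-\bar f_k^p$ and keeping track that the constants that appear are exactly $3\kappa+L$ (three $\kappa\delta_k^2$ terms from $\bar f_k^\dagger$, $\bar f_k^p$, and the gradient mismatch, plus one $L\delta_k^2$ from Lipschitz continuity) and that the $\mu\delta_k^2$ term from the infeasibility part combines additively; this is the calculation that produces the exact denominator $\theta_0(3\kappa+L)+\mu$ in (\ref{eq:delta_succ}).
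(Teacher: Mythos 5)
Your proposal is correct and follows essentially the same route as the paper's proof: the same reduction to condition (\ref{eq:accept1}) via Lemma \ref{lemma_bart} and Lemma \ref{lem:thetak}(ii), the same decomposition of $m_k(p_k)-\bar{f}_k^p$ yielding the bound $(3\kappa+L)\delta_k^2$, and the same handling of the infeasibility term via (\ref{updateh}). The constants and the final inequality match the paper exactly.
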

\begin{proof}
Iteration $k$ is successful when conditions (\ref{eq:accept1})--(\ref{eq:accept3}) are satisfied. Since $k$ is true, Lemma \ref{lemma_bart} guarantees the fulfillment of (\ref{eq:accept3}), thus we need to show that 
(\ref{eq:accept1}) and (\ref{eq:accept2}) hold when $\delta_k$ satisfies \eqref{eq:delta_succ}. 

Trivially   (\ref{eq:delta_succ}) implies (\ref{eq:accept2}). Concerning (\ref{eq:accept1}), we combine the definition of $\Predk$ in \eqref{eq:pred}, the definition of $\Aredk$ in \eqref{eq:ared}, inequality \eqref{eq:pred_cond},  condition \eqref{updateh}  so as to obtain
\begin{align}
\Aredk(x_k+p_k,\theta_{k+1}^t)&-\eta_1\Predk(\theta_{k+1}^t)\nonumber\\
&=(1-\eta_1)\Predk(\theta_{k+1}^t)+\Aredk(x_k+p_k,\theta_{k+1}^t)
-\Predk(\theta_{k+1}^t)\nonumber\\
&=(1-\eta_1)\Predk(\theta_{k+1}^t)+ \theta_{k+1}^t(m_k(p_k)- \bar{f}_k^p)\nonumber\\
& \quad  +(1-\theta_{k+1}^t) (h( \widetilde{y}_{k+1})-h(y_{k+1}^t) )\nonumber \\
&\geq (1-\eta_1){\theta_{k+1}^t}\delta_k\|g_k\|+ \theta_{k+1}^t(m_k(p_k)- \bar{f}_k^p) \nonumber \\
& \quad  - (1-\theta_{k+1}^t)  h( y_{k+1}^t) \nonumber \\
&\geq (1-\eta_1){\theta_{k+1}^t}\delta_k\|g_k\|-\theta_{k+1}^t|m_k(p_k)-\bar{f}_k^p|\nonumber \\
& \quad - h(y_{k+1}^t)
\nonumber \\
&\geq (1-\eta_1){\theta_{k+1}^t}\delta_k\|g_k\|-\theta_{k+1}^t|m_k(p_k)-\bar{f}_k^p|- \mu \delta_k^2.\label{eq:ine_success}
\end{align}
The occurrence that $k$ is true, Assumption \ref{ass_gradf} and  $\theta_{k+1}^t\le \theta_k\leq \theta_0$ (see Lemma \ref{lem:thetak}(i)) give
\begin{align}
\theta_{k+1}^t|m_k(p_k)&-\bar{f}_k^p|\nonumber\\
&=\theta_{k+1}^t\left |\bar{f}_k^{\, \dagger}-f(x_k)+f(x_k+p_k)- \bar{f}_k^p +(g_k-\nabla f(x_k))^Tp_k \right.\nonumber\\ 
&\hspace*{35pt}  \left. +f(x_k)- f(x_k+p_k)+\nabla f(x_k)^T p_k\right|\nonumber\\
& \le  \theta_0 \left(|\bar{f}_k^{\, \dagger}-f(x_k)|+|f(x_k+p_k)- \bar{f}_k^p| \right)\nonumber \\ 
&+ \theta_0\left( \|g_k-\nabla f(x_k)\|\delta_k+ \left |\int_0^1\left(\nabla f(x_k)-\nabla f(x_k+\tau p_k)\right)^Tp_kd\tau\right|\right) \nonumber \\
&\le \theta_{0} \left(3\kappa  +L\right)\delta_k^2.
\label{eq:bound3}
\end{align}
By plugging \eqref{eq:bound3} and (\ref{eq:accept3}) in \eqref{eq:ine_success}, it follows
\begin{align*}
\Aredk(x_k+p_k,\theta_{k+1})-\eta_1\Predk(\theta_{k+1})
&\geq \left( \underline{\theta}(1-\eta_1)\|g_k\|-
\left(\theta_0\left(3\kappa +L \right)+\mu \right) \delta_k\right)\delta_k,
\end{align*}
and  \eqref{eq:delta_succ} guarantees that the acceptance condition (\ref{eq:accept1}) is satisfied, since the right-hand side of the   inequality above is non-negative. 
\end{proof}
\vskip 5pt
We conclude this section by providing a straightforward lower bound on $\Aredk(x_{k+1},\theta_{k+1})$ in case $k$ is a successful iteration.
\begin{lemma}\label{lem:ared} Suppose Assumption \ref{ass_fh} holds. If the iteration $k$ is successful, then we have
\begin{equation}\label{eq:ared_successfulbis}
\Aredk(x_{k+1},\theta_{k+1})=
\Aredk(x_k+p_k,\theta_{k+1})\geq \eta_1 \delta_k\theta_{k+1}\|g_k\|.
\end{equation}
\end{lemma}
\begin{proof}
If the iteration $k$ is successful, then $x_{k+1}=x_k+p_k$ and $\theta_{k+1}=\theta_{k+1}^t $ according to \eqref{eq:succ}. By combining \eqref{eq:pred_cond} with the acceptance conditions \eqref{eq:accept1} and \eqref{eq:accept2}, it follows that
\begin{equation*}
\Aredk(x_k+p_k,\theta_{k+1})\geq  \eta_1 \Predk(\theta_{k+1}) \geq 
\eta_1{\theta_{k+1}} \delta_{k}\|g_k\|. 
\end{equation*}
\end{proof}

\subsection{Iteration complexity}
We conclude our analysis by deriving the expected number $\expect[{\mathcal{K}}_\epsilon]$  for the hitting time given in Definition \ref{Nepsilon}.
We rely on results for random processes introduced in \cite[\S 2]{bcms} and, to this end, we need some technical results
that are proved in this section.

Let $v\in(0,1)$ be a prefixed constant to be later specified and $\Sigma$ a constant such that
\begin{equation}\label{eq:sigma}
{ f(x_k)}-h(y_k)+\Sigma \geq 0, \quad \forall  \ k\ge 0.
\end{equation}
By Assumption \ref{ass_fh}, such a constant exists since $ f(x)$ is bounded from below and the values of $h(y_k)$ 
are upper bounded by $h_{up}$; one possible choice is $\Sigma= -f_{low}+h_{up}$.

We define the Lyapunov function
\begin{equation}\label{eq:Lyapunov}
\psi(x,y,\theta,\delta)=v(\theta {f(x)}+(1-\theta)h(y)+\theta\Sigma ) + (1-v)\delta^2,
\end{equation}
and consider the sequence $\{\psi_k\}$ where
\begin{equation}\label{eq:phik}
\psi_k=\psi(x_k,y_k,\theta_k,\delta_k), \quad \forall \ k\geq 0.
\end{equation}

Further, we introduce four events that characterize true iterations.
\begin{definition}\label{def_prob}
Given  $k\geq 0$,  let  $\mathcal{G}_{k,1}, \mathcal{G}_{k,2}, \mathcal{G}_{k,3}, \mathcal{G}_{k,4}$ be the events
\begin{align}
\mathcal{G}_{k,1}&=\left\{|f(X_k)- \bar{F}_k^{\dagger}|\leq  \kappa\min\{\Delta_k^2,h(Y_k)\}\right\},\label{eq:Gk1}\\
\mathcal{G}_{k,2}&=\left\{|f(X_k)-\bar{F}_k^*|\leq  \kappa\min\{\Delta_k^2,h(Y_k)\}\right\},\label{eq:Gk2}\\
\mathcal{G}_{k,3}&=\left\{\|\nabla f(X_k)-G_k\|\leq \kappa \Delta_k\right\},\label{eq:Gk3new}\\
\mathcal{G}_{k,4}&=\left\{|f(X_k+P_k)-\bar{F}_k^p|\leq \kappa\Delta_k^2\right\},\label{eq:Gk4new}
\end{align}
and $\mathcal{I}_k,\mathcal{J}_k$ the indicator functions
$${\mathcal{I}}_k=\indic({\mathcal{G}_{k,1}\cap \mathcal{G}_{k,2}\cap \mathcal{G}_{k,3}}), \quad {\mathcal{J}}_k=\indic({\mathcal{G}_{k,4}}).$$
\end{definition}
\vskip 5pt 
By 
Definition \ref{true}, we note  that iteration $k$ is true  when the realization of $\mathcal{I}_{k}\mathcal{J}_k$ equals 1. 
\vskip 5pt
\begin{assumption}\label{ass_prob}
Given  $k\geq 0$,  the events $\mathcal{G}_{k,1}$, $\mathcal{G}_{k,2}$, $\mathcal{G}_{k,3}$ are independent.
\end{assumption}
Note that Assumption \ref{ass_prob} is satisfied whenever the random variables $\omega^\dagger_{y_{k+1}^t}$, $\omega^*_{y_{k+1}^t}$ and $\omega_{y_{k+1}^g}$ 
that model the noise are independent.

In the next lemma, we state that the iteration $k$ is true in probability, provided that Assumptions \ref{ass_fh} and \ref{ass_prob} hold and the parameter $\mu$ is   properly chosen. 

\vskip 5pt
\begin{lemma}\label{lem_prob} 
Suppose that Assumptions  \ref{ass_fh} and \ref{ass_prob} hold.
Given a probability $\pi\in (0,1)$, 
let $\bar{\mu}(\pi^{\frac{1}{4}})>0$ be defined as in Assumption \ref{ass_fh}(ii) and assume that the parameter $\mu$ in Algorithm \nome is such that $\mu\leq \kappa \bar{\mu}(\pi^{\frac{1}{4}})$ with $\kappa$ as in Definition \ref{true}. Then we have
\begin{equation}\label{probpi}
\Pbk({\mathcal{I}}_k{\mathcal{J}}_k=1)\geq \pi.
\end{equation}
\end{lemma}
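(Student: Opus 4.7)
The plan is to prove $\Pbk(\mathcal{G}_{k,1}\cap\mathcal{G}_{k,2}\cap\mathcal{G}_{k,3}\cap\mathcal{G}_{k,4})\ge\pi$ in two steps: first establish the four marginal bounds $\Pbk(\mathcal{G}_{k,i})\ge\pi^{1/4}$ via Assumption~\ref{ass_fh}(ii), and then assemble them using the independence supplied by Assumption~\ref{ass_prob}, together with a conditioning argument to cope with the fact that $\mathcal{G}_{k,4}$ is evaluated at the random point $X_k+P_k$.

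For the marginals I would invoke Assumption~\ref{ass_fh}(ii) with $\alpha=\pi^{1/4}$, matching $\rho$ to the tolerance inside each event and verifying $h(Y)\le\bar{\mu}(\pi^{1/4})\,\rho$ by combining \eqref{updateh} with the standing hypothesis $\mu\le\kappa\bar{\mu}(\pi^{1/4})$. Concretely: for $\mathcal{G}_{k,1}$ and $\mathcal{G}_{k,2}$, take $\rho=\kappa\min\{\Delta_k^2,h(Y_k)\}$ and apply the second bound of \eqref{updateh} to $h(Y_{k+1}^t)$; for $\mathcal{G}_{k,3}$, take $\rho=\kappa\Delta_k$ and apply the third bound to $h(Y_{k+1}^g)$; for $\mathcal{G}_{k,4}$, take $\rho=\kappa\Delta_k^2$ and use $h(Y_{k+1}^t)\le\mu\Delta_k^2$. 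Each of these yields a probability of at least $\pi^{1/4}$.

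To combine them, I would first invoke Assumption~\ref{ass_prob} to write
\[
\Pbk(\mathcal{G}_{k,1}\cap\mathcal{G}_{k,2}\cap\mathcal{G}_{k,3})=\prod_{i=1}^{3}\Pbk(\mathcal{G}_{k,i})\ge\pi^{3/4}.
\]
The subtle point is $\mathcal{G}_{k,4}$: since $X_k+P_k$ depends on $G_k$, Assumption~\ref{ass_fh}(ii) cannot be applied to it directly under $\Pbk$. I would handle this by passing to the finer $\sigma$-algebra underlying $\Pbkd$, which makes $G_k$, and hence $P_k$ and the evaluation point $X_k+P_k$, deterministic; since $\omega^{p}_{Y_{k+1}^t}$ is independent of the preceding samples, Assumption~\ref{ass_fh}(ii) applied at the now-fixed point $X_k+P_k$ gives $\Pbkd(\mathcal{G}_{k,4})\ge\pi^{1/4}$. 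The identity
\[
\Pbk\Bigl(\bigcap_{i=1}^{4}\mathcal{G}_{k,i}\Bigr)=\Ek\Bigl[\indic_{\mathcal{G}_{k,3}}\,\Pbkd(\mathcal{G}_{k,1}\cap\mathcal{G}_{k,2}\cap\mathcal{G}_{k,4})\Bigr]
\]
combined with the conditional mutual independence of $\omega^\dagger,\omega^*,\omega^p$ (which factorises the inner probability as at least $\pi^{3/4}$) and $\Pbk(\mathcal{G}_{k,3})\ge\pi^{1/4}$ then yields the required bound $\pi^{3/4}\cdot\pi^{1/4}=\pi$.

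The main obstacle is precisely this interplay: Assumption~\ref{ass_prob} only supplies independence among $\mathcal{G}_{k,1},\mathcal{G}_{k,2},\mathcal{G}_{k,3}$, so $\mathcal{G}_{k,4}$ must be folded in through the \emph{conditional} independence of the noise realisations given $\Pbkd$, and identifying the correct conditioning $\sigma$-algebra is what makes the random evaluation point $X_k+P_k$ tractable.
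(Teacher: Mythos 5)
Your proposal follows the paper's proof essentially verbatim: the same marginal bounds $\Pbk(\mathcal{G}_{k,i})\geq \pi^{1/4}$ obtained from Assumption~\ref{ass_fh}(ii) with the same choices of $\rho$ and the bounds \eqref{updateh} combined with $\mu\leq\kappa\bar{\mu}(\pi^{1/4})$, the product bound $\pi^{3/4}$ for the first three events via Assumption~\ref{ass_prob}, and the passage to the half-step conditioning $\Pbkd$ to handle the random evaluation point $X_k+P_k$ in $\mathcal{G}_{k,4}$, yielding $\pi^{3/4}\cdot\pi^{1/4}=\pi$. The only (cosmetic) difference is that where the paper defers the final assembly to \cite[p.~100]{bcms}, you spell out the tower-property identity explicitly; be aware that your factorisation of the inner probability invokes conditional independence of $\omega^{\dagger}$, $\omega^{*}$, $\omega^{p}$, which slightly exceeds the literal content of Assumption~\ref{ass_prob} (stated only for $\mathcal{G}_{k,1},\mathcal{G}_{k,2},\mathcal{G}_{k,3}$) but is consistent with the remark following it.
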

\begin{proof}
From the update rule \eqref{updateh} and condition $\mu \leq  \kappa \bar{\mu}(\pi^{\frac{1}{4}})$, we have
\begin{equation*}  
h({y}^t_{k+1})\leq \bar{\mu}(\pi^{\frac{1}{4}})\min\left\{\kappa \delta_k^2,\kappa h(y_k)\right\}, \,\, h({y}^g_{k+1})\leq \bar{\mu}(\pi^{\frac{1}{4}})(\kappa \delta_k).
\end{equation*}
Then, by Assumption \ref{ass_fh}(ii), it follows that
$$
\Pbk(\mathcal{G}_{k,i})\geq \pi^{\frac{1}{4}}, \quad 1\leq i\leq 3.
$$
Since the events $\mathcal{G}_{k,i}$ are independent by Assumption \ref{ass_prob}, the above inequality yields
$$
\Pbk(\mathcal{I}_k=1)\geq \pi^{\frac{3}{4}}.
$$
Analogously, from the update rule \eqref{updateh} and Assumption \ref{ass_fh}(ii), we have
$$
\Pbkd(\mathcal{J}_k=1)\geq \pi^{\frac{1}{4}}.
$$
Then, the thesis follows as in \cite[p. 100]{bcms}.
\end{proof}

\vskip 5pt

In the rest of the analysis, we will make use of a series of parameters  introduced below which are positive by construction.
\vskip 5pt
\begin{definition}\label{costanti}
Let $\kappa$, $\zeta$ be some scalars satisfying
\begin{align}
&0<\kappa<  \min\left\{\frac{1-r-\underline{\theta}}{2\underline{\theta}},\frac{\eta_1\eta_2}{2}\right\}\label{eq:kappa_condition},\\
& \zeta> \kappa+\max\left\{\eta_2,\frac{\theta_0(3\kappa+L){ +\kappa\bar{\mu}_{up}}}{\underline{\theta}(1-\eta_1)},\frac{\kappa(2+\eta_1)}{\eta_1}\right\}.\label{eq:zeta_condition}
\end{align}
Let $C_1,\, C_2,\, C_3,\, C_4$ be the positive constants
\begin{equation}
\begin{cases}
C_1 =  \underline{\theta}\left(\eta_1\left(1-\frac{\kappa}{\zeta}\right)-\frac{2\kappa}{\zeta}\right),\\
C_2 = 1+ \displaystyle\frac{L+\kappa\bar{\mu}_{up}}{\zeta},\\
C_3 = \underline{\theta}(\eta_1\eta_2-2\kappa),\\
C_4 = L+\zeta+\kappa\bar{\mu}_{up},
\end{cases}
\label{eq:C1}
\end{equation}
and let $v\in (0,1)$ be the scalar such that
\begin{equation}
\frac{v}{1-v}= \max\left\{\frac{4\gamma^2}{\zeta C_1},\frac{2\gamma^2}{C_3}\right\} \label{eq:v_condition}. 
\end{equation}
\end{definition}
\vskip 5pt\noindent 
 We note that the constants $C_1,\,C_3$ are positive due to \eqref{eq:zeta_condition} and \eqref{eq:kappa_condition}, respectively.

Letting $\Psi_k$ be the random function with realizations $\psi_k$, the following theorem shows a bound on the expected value $\Ek[\Psi_{k+1}-\Psi_k]$, from which one easily derives that $\Delta_k$ tends to zero  almost surely.
\vskip 5pt
\begin{theorem}\label{teo_limidelta0}
Let $\kappa,\, \zeta, \,C_1,\, C_2,\, C_3,\, C_4$ and $v$ be as in Definition \ref{costanti}. Let   
$\pi\in\left (\frac{1}{2},1\right )$ be a probability satisfying
\begin{align}
\displaystyle&\frac{\pi-\frac{1}{2}}{1-\pi}\geq \frac{C_2}{C_1}, \quad 1-\pi\leq \frac{\gamma^2-1}{2(\gamma^4-1)+2\gamma^2C_4\max\left\{\frac{4\gamma^2}{\zeta C_1},\frac{2\gamma^2}{C_3}\right\} }. \label{eq:pi_condition}
\end{align}
Suppose that Assumptions \ref{ass_fh}, \ref{ass_gradf} and \ref{ass_prob} hold. Let $\bar{\mu}(\pi^{\frac{1}{4}})>0$ be defined as in Assumption \ref{ass_fh}(ii) and assume $\mu\leq \kappa \bar{\mu}(\pi^{\frac{1}{4}})$ with $\kappa$ as in Definition \ref{true}. Let $\psi_k$ be defined as in (\ref{eq:phik}) and (\ref{eq:sigma}). Then, there exists $\sigma>0$ such that
\begin{equation}\label{eq:suff_decrease}
{\Ek[\Psi_{k+1}-\Psi_k]\leq -\sigma \Delta_k^2, \quad \forall \ k\geq 0.}
\end{equation}
Furthermore, 
\begin{equation}\label{eq:delta_summable}
\sum_{k=0}^{\infty}\Delta_k^2<\infty
\end{equation}
almost surely.
\end{theorem}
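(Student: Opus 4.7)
The overall plan is to apply the supermartingale framework of \cite{bcms} to the Lyapunov sequence $\{\Psi_k\}$, which means first establishing the one-step conditional decrease \eqref{eq:suff_decrease} and then deducing the almost-sure summability \eqref{eq:delta_summable} from a standard martingale-convergence argument. I would begin by decomposing
\[
\Psi_{k+1}-\Psi_k = v\bigl[\theta_{k+1}(f(X_{k+1})+\Sigma)-\theta_k(f(X_k)+\Sigma)+(1-\theta_{k+1})h(Y_{k+1})-(1-\theta_k)h(Y_k)\bigr]+(1-v)(\Delta_{k+1}^2-\Delta_k^2),
\]
and splitting the analysis into four regimes according to whether the iteration is true (i.e.\ $\mathcal{I}_k\mathcal{J}_k=1$) and whether $\|G_k\|\geq \zeta\Delta_k$.

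In the first regime (true, $\|G_k\|\geq \zeta\Delta_k$) the lower bound on $\|G_k\|$ together with \eqref{eq:zeta_condition} verifies the hypothesis \eqref{eq:delta_succ} of Lemma \ref{lemma_succ}, so the step is accepted. Lemma \ref{lem:ared}, combined with Definition \ref{true} to pass from $\bar F_k^*,\bar F_k^p$ to $f(X_k),f(X_{k+1})$ (absorbing bias terms of size at most $\kappa\Delta_k^2$), yields a decrease of $\theta_{k+1}f(X_{k+1})+(1-\theta_{k+1})h(Y_{k+1})$ of order $C_3\Delta_k^2$ after invoking $\theta_{k+1}\geq\underline\theta$ from Lemma \ref{lem:thetak}. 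The accompanying $(1-v)(\Delta_{k+1}^2-\Delta_k^2)$ contribution is $(1-v)(\gamma^2-1)\Delta_k^2$ on this successful branch, and the choice \eqref{eq:v_condition} of $v/(1-v)\geq 2\gamma^2/C_3$ ensures that the $v$-term dominates, producing a clean bound of the form $\Psi_{k+1}-\Psi_k\leq -c\Delta_k^2$ for an explicit $c>0$.

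In the second regime (true, $\|G_k\|<\zeta\Delta_k$) the step may or may not be accepted. When accepted, I would bound the increment in $\theta f+(1-\theta)h$ using Assumption \ref{ass_gradf}, the accuracy inequalities in Definition \ref{true} and the crude consequence $\|\nabla f(X_k)\|\leq (\zeta+\kappa)\Delta_k$, together with $h(Y_{k+1}^t)\leq\mu\Delta_k^2\leq\kappa\bar\mu_{up}\Delta_k^2$; this yields an upper bound proportional to $C_4\Delta_k^2$. When rejected, $X_{k+1}=X_k$, $Y_{k+1}=Y_k$, $\theta_{k+1}=\theta_k$, so the first bracket vanishes and the $(1-v)$-part contributes $-(1-v)(1-\gamma^{-2})\Delta_k^2$. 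The third and fourth regimes (false event) are handled analogously: the crude Lipschitz estimate $|f(X_{k+1})-f(X_k)|\leq\|\nabla f(X_k)\|\Delta_k+L\Delta_k^2$ replaces Lemma \ref{lem:ared}, with $\|\nabla f(X_k)\|$ bounded via $\|G_k\|+\|G_k-\nabla f(X_k)\|$, and the update rule \eqref{updateh} together with $\mu\leq\kappa\bar\mu_{up}$ still produces an increase no larger than a constant multiple of $C_2\Delta_k^2$.

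Taking the conditional expectation $\Ek[\cdot]$, the true event carries mass at least $\pi$ by Lemma \ref{lem_prob}, and the complementary event at most $1-\pi$. The explicit inequalities \eqref{eq:pi_condition}, namely $(\pi-\tfrac12)/(1-\pi)\geq C_2/C_1$ together with the upper bound on $1-\pi$, are exactly calibrated so that the good decrease on the true event dominates the possible increase on its complement, giving $\Ek[\Psi_{k+1}-\Psi_k]\leq -\sigma\Delta_k^2$ for some $\sigma>0$. For \eqref{eq:delta_summable} I would observe that $\Psi_k\geq 0$ by the defining property \eqref{eq:sigma} of $\Sigma$, so the process $M_k:=\Psi_k+\sigma\sum_{j<k}\Delta_j^2$ is a nonnegative supermartingale; Doob's convergence theorem then yields almost-sure convergence of $M_k$, which forces $\sum_{k=0}^\infty\Delta_k^2<\infty$ almost surely. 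The main obstacle I anticipate is the four-case bookkeeping on the false event, where one must carry the threshold $\zeta$ carefully to avoid needing an a priori upper bound on $\|\nabla f\|$; the constants $C_1,C_2,C_3,C_4$ together with the inequalities \eqref{eq:v_condition} and \eqref{eq:pi_condition} are engineered precisely to make this balancing go through.
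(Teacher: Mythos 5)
Your overall architecture (a Lyapunov decrease via a true/false and large/small-gradient case analysis, followed by summability from nonnegativity of $\Psi_k$) matches the paper's, but there is a genuine gap in how you set up the case split. You partition on the event $\{\|G_k\|\geq\zeta\Delta_k\}$, whereas the paper partitions on $\{\|\nabla f(X_k)\|\geq\zeta\Delta_k\}$. This difference matters twice. First, $\|\nabla f(X_k)\|$ and $\Delta_k$ are determined by the $\sigma$-algebra generated up to the beginning of iteration $k$, so the paper may restrict to either gradient regime and still invoke $\Pbk(\mathcal{I}_k\mathcal{J}_k=1)\geq\pi$ from Lemma \ref{lem_prob}; by contrast, $G_k$ is generated during iteration $k$ and is correlated with the event that the iteration is true (it enters $\mathcal{G}_{k,3}$), so after conditioning on $\{\|G_k\|\geq\zeta\Delta_k\}$ you can no longer assert that the true event carries conditional mass at least $\pi$ --- in principle the false mass could concentrate entirely in one of your regimes.

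Second, and more concretely, your treatment of false iterations does not go through: on a false successful step the only available bound is $f(X_{k+1})-f(X_k)\leq\|\nabla f(X_k)\|\Delta_k+L\Delta_k^2$, and you cannot convert this to $O(\Delta_k^2)$ via $\|\nabla f(X_k)\|\leq\|G_k\|+\|G_k-\nabla f(X_k)\|$, because on false iterations the error $\|G_k-\nabla f(X_k)\|$ is exactly the quantity that is \emph{not} controlled by \eqref{eq:true3}. The increase on a false iteration is therefore of order $\|\nabla f(X_k)\|\Delta_k$, which is unbounded relative to $\Delta_k^2$ when the true gradient is large. This is why the paper, in the regime $\|\nabla f(X_k)\|\geq\zeta\Delta_k$, extracts from true iterations a decrease that is also first order in the true gradient, namely $-vC_1\Delta_k\|\nabla f(X_k)\|$ (via $\Aredk\geq\eta_1\theta_{k+1}\Delta_k\|G_k\|$ and $\|G_k\|\geq(1-\kappa/\zeta)\|\nabla f(X_k)\|$), rather than the weaker $-vC_3\Delta_k^2$ you take from Lemma \ref{lem:ared}; the first inequality in \eqref{eq:pi_condition}, $(\pi-\tfrac12)/(1-\pi)\geq C_2/C_1$, is precisely the balance between these two $\|\nabla f(X_k)\|\Delta_k$-scale terms, and your bookkeeping never actually engages with it. To repair the argument, define the two regimes by the true gradient, keep the first-order terms in $\|\nabla f(X_k)\|$ in the large-gradient case with the constants $C_1,C_2$, and only in the small-gradient case $\|\nabla f(X_k)\|<\zeta\Delta_k$ reduce everything to multiples of $\Delta_k^2$ with $C_3,C_4$. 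Your final step deducing \eqref{eq:delta_summable} from nonnegativity of $\Psi_k$ and supermartingale convergence is fine and essentially equivalent to the paper's telescoping-plus-monotone-convergence argument.
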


\begin{proof}
See the Appendix.
\end{proof}
\vskip 5pt
We observe that (\ref{eq:delta_summable}) implies that $\Delta_k$ tends to zero almost surely.  Thus, the accuracy requirements given in Definition \ref{true} for true iterations   are expected to become more stringent as the iterations proceed.  

In order to obtain the expected value for the hitting time $\mathcal{K}_{\epsilon}$ given in Definition \ref{Nepsilon}, 
we need a value $\delta^\dagger$, independent of $k$, such that if $\delta_k<\delta^\dagger$, 
iteration $k$ is true and $k<\mathcal{K}_{\epsilon}$,
then the iteration is successful. We specify the value of such $\delta^{\dagger}$ in the next Lemma.
\vskip 5pt

\begin{lemma}\label{ddagger}
Let Assumptions \ref{ass_fh}, {\ref{ass_gradf}} hold. 
Suppose that  $\|\nabla f(x_k)\|> \epsilon$ for some $\epsilon>0$, 
the iteration $k$ is true,  and 
\begin{equation} \label{def_b}
\delta_k<\delta^\dagger:= \min \left \{ \frac{\epsilon}{2\kappa}, \frac{\epsilon}{2\eta_2}, 
\frac{ \underline{\theta}\epsilon(1-\eta_1)}{2(\theta_0(3\kappa+L)+\mu)}\right\}.
\end{equation}
Then, iteration k is successful.
\end{lemma}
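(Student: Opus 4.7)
The plan is to reduce to Lemma \ref{lemma_succ}, which already gives a sufficient condition for a true iteration to be successful in terms of a lower bound on $\|g_k\|$ relative to $\delta_k$. The three entries of $\delta^\dagger$ are tailored precisely to convert the hypothesis $\|\nabla f(x_k)\|>\epsilon$ on the \emph{true} gradient into the corresponding hypotheses on the \emph{stochastic} gradient $g_k$ that Lemma \ref{lemma_succ} needs. Note that $\kappa$ satisfies $\kappa\leq (1-r-\underline\theta)/(2\underline\theta)$ by \eqref{eq:kappa_condition}, so the standing bound on $\kappa$ in Lemma \ref{lemma_succ} is automatic.

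The first step is to pass from the true gradient to the model gradient. Since iteration $k$ is true, \eqref{eq:true3} yields $\|\nabla f(x_k)-g_k\|\le\kappa\delta_k$, and the triangle inequality together with $\|\nabla f(x_k)\|>\epsilon$ gives
\[
\|g_k\|\geq \|\nabla f(x_k)\|-\|\nabla f(x_k)-g_k\|>\epsilon-\kappa\delta_k.
\]
The first entry of $\delta^\dagger$, $\delta_k<\epsilon/(2\kappa)$, forces $\kappa\delta_k<\epsilon/2$, hence $\|g_k\|>\epsilon/2$. This is the crucial translation.

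The second step is to show that the remaining two entries of $\delta^\dagger$ imply the bound \eqref{eq:delta_succ} required by Lemma \ref{lemma_succ}. Using $\|g_k\|>\epsilon/2$:
\begin{itemize}
\item the bound $\delta_k<\epsilon/(2\eta_2)$ gives $\delta_k<\|g_k\|/\eta_2$, matching the first term in the minimum of \eqref{eq:delta_succ} and, in particular, delivering the acceptance condition \eqref{eq:accept2};
\item the bound $\delta_k<\underline\theta\epsilon(1-\eta_1)/(2\theta_0(3\kappa+L))$ combined with $\|g_k\|>\epsilon/2$ gives
\[
\delta_k<\frac{\underline\theta(1-\eta_1)\|g_k\|}{\theta_0(3\kappa+L)},
\]
which in turn dominates the second term of \eqref{eq:delta_succ}.
\end{itemize}
Both inequalities in \eqref{eq:delta_succ} are therefore met, so Lemma \ref{lemma_succ} applies and iteration $k$ is successful.

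The only delicate point is the bookkeeping of the constants in the second bullet, where the sufficient-decrease argument inside Lemma \ref{lemma_succ} picked up the extra additive term $\mu$ in the denominator $\theta_0(3\kappa+L)+\mu$; one must verify that replacing this denominator by $\theta_0(3\kappa+L)$ (together with the factor $1/2$ baked into $\delta^\dagger$) still leaves enough slack to absorb the $\mu\delta_k^2$ term appearing in inequality \eqref{eq:ine_success}. This is essentially an algebraic exercise, and I expect it to be the main (mild) obstacle; everything else is a direct application of Lemma \ref{lemma_succ} after the triangle-inequality lower bound on $\|g_k\|$.
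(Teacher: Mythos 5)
Your argument is exactly the paper's: use the first entry of $\delta^\dagger$ together with \eqref{eq:true3} to get $\|g_k\|\ge\epsilon/2$, then hand off to Lemma \ref{lemma_succ}; the paper's proof is just a two-line version of this. The ``delicate point'' you flag in your last paragraph is real, and it is a defect of the paper's constant rather than of your reasoning: from $\delta_k<\underline{\theta}\epsilon(1-\eta_1)/(2\theta_0(3\kappa+L))$ and $\epsilon<2\|g_k\|$ one only gets $\delta_k<\underline{\theta}(1-\eta_1)\|g_k\|/(\theta_0(3\kappa+L))$, and the factor $2$ in $\delta^\dagger$ is entirely consumed in converting $\epsilon$ to $2\|g_k\|$, so there is no leftover slack to absorb the $+\mu$ in the denominator of \eqref{eq:delta_succ}. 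The clean fix is to define the third entry of $\delta^\dagger$ as $\underline{\theta}\epsilon(1-\eta_1)/\bigl(2(\theta_0(3\kappa+L)+\mu)\bigr)$ (and adjust $\xi$ in \eqref{eq:delta_epsilon} accordingly); with that harmless modification both entries of \eqref{eq:delta_succ} are verified and your proof closes.
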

\begin{proof}
Conditions (\ref{eq:true3}) and (\ref{def_b}) give 
$\|g_{k}-\nabla f(x_k)\|\le \kappa \delta_k <\frac{\epsilon}{2}$.
Consequently   $\|\nabla f(x_k)\|>\epsilon$ implies $\|g_k\|\ge \frac{\epsilon}{2}$, and 
iteration $k$ is successful by Lemma \ref{lemma_succ}.
\end{proof}
\vskip 5pt

Let  ${\mathcal{G}}_k=\mathcal{G}_{k,1}\cap \mathcal{G}_{k,2}\cap \mathcal{G}_{k,3}\cap \mathcal{G}_{k,4}$ with $\mathcal{G}_{k,1},\mathcal{G}_{k,2},\mathcal{G}_{k,3},\mathcal{G}_{k,4}$ given in Definition \ref{def_prob}, $\indic({\mathcal{G}}_k)$ be the value of the indicator function
for the event ${\mathcal{G}}_k$ and $W_k$  be the random variable  defined as
\begin{equation}\label{eq:Wk}
\begin{cases}
W_0 =1\\
W_{k+1}=2\left(\indic({\mathcal{G}}_k) -\frac{1}{2}\right), \quad k=0,1,\ldots
\end{cases}
\end{equation}
Clearly, $W_k$ takes values $\pm 1$.  Then, we can prove the following result.

\begin{lemma}\label{ass_bcs}
Let the assumptions of Theorem \ref{teo_limidelta0} hold,   $\delta^\dagger$ as in (\ref{def_b})
and  $\mathcal{K}_{\epsilon}$ as in Definition \ref{Nepsilon}. Suppose there exists some $j_{\max}\geq 0$ such that $\delta_{\max}=\gamma^{j_{\max}}\delta_0$, and $\delta_0>\delta^{\dagger}$. 
	Then,
\begin{itemize}[leftmargin = 18pt]
\item[i)] there exists $\lambda>0$ such that $\Delta_k\leq \delta_0e^{\lambda \cdot j_{\max}}$ for all $k\geq 0$;
\item[ii)] there exists a constant $ \delta_{\epsilon}=\delta_0e^{\lambda \cdot j_{\epsilon}}$ for some $j_{\epsilon}\leq 0$ such that, for all $k\geq 0$,
\begin{equation}\label{eq:ii}
\mathbbm{1}_{\{\mathcal{K}_{\epsilon}>k\}}\Delta_{k+1}\geq \mathbbm{1}_{\{\mathcal{K}_{\epsilon}>k\}}\min\{\Delta_ke^{\lambda W_{k+1}}, \delta_{\epsilon} \}.
\end{equation}
\item[iii)] there exists a nondecreasing function $\ell:[0,\infty)\rightarrow (0,\infty)$ and a constant $\Theta>0$ such that, for all $k\geq 0$,
\begin{equation}
	\mathbbm{1}_{\{\mathcal{K}_{\epsilon}>k\}}\mathbb{E}_{k-1}[\Psi_{k+1}]\leq \mathbbm{1}_{\{\mathcal{K}_{\epsilon}>k\}}\Psi_k-\mathbbm{1}_{\{\mathcal{K}_{\epsilon}>k\}}\Theta \ell(\Delta_k).
\end{equation}
\end{itemize}
\end{lemma}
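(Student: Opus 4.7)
The three parts of the lemma collectively verify the structural conditions required to invoke the renewal-reward framework of \cite[\S 2]{bcms}, so my plan is to handle each part separately, relying on Lemma \ref{ddagger} and Theorem \ref{teo_limidelta0} rather than re-deriving the underlying estimates. Throughout I would fix $\lambda = \ln \gamma > 0$.

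Part (i) is immediate: the update rules \eqref{eq:succ} and \eqref{eq:unsucc} both cap $\Delta_{k+1}$ at $\delta_{\max}$, so $\Delta_k \le \delta_{\max} = \gamma^{j_{\max}}\delta_0 = \delta_0 e^{\lambda j_{\max}}$ for every $k \ge 0$.

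For part (ii), I would begin by selecting an integer $j_\epsilon \le 0$ small enough that $\delta_\epsilon := \delta_0 e^{\lambda j_\epsilon} \le \delta^\dagger/\gamma$; such $j_\epsilon$ exists because $\delta_0 > \delta^\dagger$ by assumption. Working on the event $\{\mathcal{K}_\epsilon > k\}$ (on which $\|\nabla f(X_k)\| > \epsilon$), I would then split into cases on the sign of $W_{k+1}$. If $W_{k+1} = -1$, the bound $\Delta_{k+1} \ge \Delta_k/\gamma = \Delta_k e^{-\lambda}$ is automatic, since the unsuccessful branch yields exactly $\Delta_k/\gamma$ and the successful branch yields at least $\Delta_k$. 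If $W_{k+1} = +1$, the iteration is true in the sense of Definition \ref{true}, and I would subdivide according to the size of $\Delta_k$ relative to $\delta^\dagger$. When $\Delta_k < \delta^\dagger$, Lemma \ref{ddagger} forces success, so $\Delta_{k+1} = \min\{\gamma \Delta_k,\delta_{\max}\}$, which dominates $\min\{\Delta_k e^{\lambda},\delta_\epsilon\}$ because $\delta_\epsilon \le \delta_{\max}$. When $\Delta_k \ge \delta^\dagger$, both branches of Step 4 yield $\Delta_{k+1} \ge \Delta_k/\gamma \ge \delta^\dagger/\gamma \ge \delta_\epsilon$, which again dominates the minimum.

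Part (iii) reduces directly to Theorem \ref{teo_limidelta0}, which supplies $\Ek[\Psi_{k+1}] \le \Psi_k - \sigma \Delta_k^2$. Since $\{\mathcal{K}_\epsilon > k\}$ is determined by $X_0,\ldots,X_k$ and $X_k$ is measurable with respect to the $\sigma$-algebra generated up through iteration $k-1$, the indicator $\mathbbm{1}_{\{\mathcal{K}_\epsilon > k\}}$ can be pulled inside $\Ek$, yielding the stated inequality with $\Theta = \sigma$ and the nondecreasing function $\ell(t) = t^2$. The most delicate point in the whole argument is the subcase $W_{k+1} = +1$ with $\Delta_k \ge \delta^\dagger$ in part (ii), where Lemma \ref{ddagger} does not apply and one must instead exploit the worst-case contraction factor $\gamma^{-1}$ of the unsuccessful branch together with the calibrated choice of $\delta_\epsilon$.
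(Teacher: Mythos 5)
Your proof is correct and follows essentially the same route as the paper's: $\lambda=\ln\gamma$ for (i), Lemma \ref{ddagger} combined with the worst-case contraction $\gamma^{-1}$ and a grid-aligned $\delta_\epsilon$ below $\delta^\dagger$ for (ii), and a direct reduction to Theorem \ref{teo_limidelta0} with $\ell(t)=t^2$, $\Theta=\sigma$ for (iii). The only (harmless) difference is organizational: in (ii) you case-split on $W_{k+1}$ and on $\Delta_k$ versus $\delta^\dagger$ after calibrating $\delta_\epsilon\le\delta^\dagger/\gamma$, whereas the paper splits on $\Delta_k$ versus $\delta_\epsilon$ and uses the fact that $\Delta_k$ lies on the grid $\{\gamma^{i}\delta_\epsilon\}$; both yield \eqref{eq:ii}.
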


\begin{proof}
The proof parallels that of \cite[Lemma 7]{bcms}.	
	
i) Since $\delta_{\max}=\gamma^{j_{\max}}\delta_0$, we can set $\lambda=\log(\gamma)>0$, and the thesis follows from 
imposing $\delta_k\le \delta_{\max}$, for all $ k\ge 0$, see Step 4 of Algorithm \nomepunto.

ii) Let us set
\begin{equation}\label{eq:delta_epsilon}
	{ \delta_\epsilon=\frac{\epsilon}{\xi}}, \quad \text{where }\xi\geq \max\left\{2\kappa ,2\eta_2,
	\frac{2\theta_0(3\kappa+L)}{\underline{\theta}(1-\eta_1)}  \right\},
\end{equation}
and assume that $ \delta_{\epsilon}=\gamma^{j_{\epsilon}}\delta_0$, for some integer $j_{\epsilon}\leq 0$; 
notice that we can always choose $\xi$ sufficiently large so that this is true. 
As a consequence, $\Delta_k= \gamma^{i_k} \delta_\epsilon$ for some random integer variable $i_k$.

When $\mathbbm{1}_{\{\mathcal{K}_{\epsilon}>k\}}=0$, inequality \eqref{eq:ii} trivially holds. 
Otherwise, conditioning on $\mathbbm{1}_{\{\mathcal{K}_{\epsilon}>k\}}=1$, let us show that
\begin{equation}\label{eq:thesis}
\Delta_{k+1}\geq \min\{\delta_\epsilon,\min\{\delta_{\max},\gamma\Delta_k\}\indic({\mathcal{G}}_k)+\gamma^{-1}\Delta_k(1-\indic({\mathcal{G}}_k))\}.
\end{equation}
Indeed,   taking into account the above
equation $\Delta_k=\gamma^{i_k}\delta_\epsilon$,  for any realization such that $\delta_k>\delta_\epsilon$ we have $\delta_k\geq \gamma\delta_{\epsilon}$, and 
the updating rule for $\delta_k$ in Step 4 of Algorithm \nome implies  $\delta_{k+1}\geq \delta_{\epsilon}$. Now let us consider a realization such that  $\delta_k\leq \delta_{\epsilon}$. Since $\mathcal{K}_{\epsilon}>k$ and $\delta_{\epsilon}\leq \delta^{\dagger}$, if $\indic({\mathcal{G}}_k)=1$ then we can apply Lemma \ref{ddagger} and conclude that $k$ is successful. Hence, by Step 4, we have $\delta_{k+1}=\min\{\delta_{\max},\gamma\delta_k\}$. If $\indic({\mathcal{G}}_k)=0$, then we cannot guarantee that $k$ is successful; however, again using
the updating rule for $\delta_k$ in Step 4 of Algorithm \nomepunto, we can write $\delta_{k+1}\geq \gamma^{-1}\delta_k$. Combining these two cases, we get \eqref{eq:thesis}. If we observe that $\delta_{\max}=\gamma^{j_{\max}}\delta_0\geq { \gamma^{j_{\epsilon}}\delta_{0}= \delta_{\epsilon}}$, and recall the definition of $W_k$ in \eqref{eq:Wk}, then equation \eqref{eq:thesis} easily yields \eqref{eq:ii}.

(iii) The thesis trivially follows from \eqref{eq:suff_decrease} with $\ell(\Delta)=\Delta^2$ and $\Theta=\sigma$.
\end{proof}

We now state the main result.
\vskip 5pt
\begin{theorem}\label{teo_E}
	Under the assumptions of Lemma \ref{ass_bcs}, we have
	\begin{equation}{ 
		\mathbb{E}[\mathcal{K}_{\epsilon}]\leq \frac{{\pi}}{2{\pi}-1}\cdot \frac{\psi_0\xi^2}{{\sigma}  \epsilon^2}}+1.
	\end{equation}
	where $\sigma$ is the scalar in \eqref{eq:suff_decrease} 
	\end{theorem}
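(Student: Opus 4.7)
The strategy is to apply directly the general stochastic-process complexity result from \cite[\S 2]{bcms}, for which Lemma \ref{ass_bcs} has already verified all the needed hypotheses. That framework considers a random process $(\Psi_k, \Delta_k, W_k)$ in which (i) $\Delta_k$ stays within a geometric grid bounded above by $\delta_0 e^{\lambda j_{\max}}$, (ii) on the event $\{\mathcal{K}_\epsilon > k\}$ the radius evolves as a biased random walk $\Delta_{k+1} \geq \min\{\Delta_k e^{\lambda W_{k+1}}, \delta_\epsilon\}$ with $W_{k+1} \in \{\pm 1\}$, and (iii) the Lyapunov sequence satisfies a supermartingale-type decrease with step $\Theta \ell(\Delta_k)$. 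Parts (i)--(iii) of Lemma \ref{ass_bcs} supply exactly these three ingredients, with $\ell(t) = t^2$ and $\Theta = \sigma$.

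The remaining input is the probabilistic bias of $W_{k+1}$: by Lemma \ref{lem_prob} and the independence structure, $\mathbb{P}_{k-1}(W_{k+1} = 1) = \mathbb{P}_{k-1}(\indic(\mathcal{G}_k) = 1) \geq \pi > 1/2$, which is what drives the drift towards growing $\Delta_k$ until $\delta_\epsilon$ is reached. Under these conditions, the general template yields a bound of the form
\begin{equation*}
\mathbb{E}[\mathcal{K}_\epsilon] \leq \frac{\pi}{2\pi - 1}\cdot \frac{\Psi_0}{\Theta\, \ell(\delta_\epsilon)} + 1,
\end{equation*}
where the factor $\pi/(2\pi-1)$ is the standard renewal-theoretic cost of a biased random walk hitting the threshold $\delta_\epsilon$, and the additive $+1$ accounts for the final iteration at which $\mathcal{K}_\epsilon$ is actually reached.

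The proof I would write thus consists of quoting the above complexity template from \cite[\S 2]{bcms}, checking that each of its assumptions is fulfilled by the items of Lemma \ref{ass_bcs} together with Lemma \ref{lem_prob}, and then substituting $\Theta = \sigma$, $\ell(\delta_\epsilon) = \delta_\epsilon^2 = \epsilon^2/\xi^2$ from \eqref{eq:delta_epsilon}, and $\Psi_0 = \psi_0$. This gives
\begin{equation*}
\mathbb{E}[\mathcal{K}_\epsilon] \leq \frac{\pi}{2\pi-1}\cdot \frac{\psi_0\, \xi^2}{\sigma\, \epsilon^2} + 1,
\end{equation*}
which is the asserted bound.

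The main obstacle is not any new calculation but rather a bookkeeping one: verifying that our definition of $W_k$ in \eqref{eq:Wk}, of the stopped Lyapunov function, and of the threshold $\delta_\epsilon$ exactly match the normalization assumed in the general renewal lemma of \cite{bcms}, in particular that the conditional bias $\mathbb{P}_{k-1}(W_{k+1}=1)\geq \pi$ holds unconditionally of $\{\mathcal{K}_\epsilon > k\}$ (which is ensured because $\mathcal{K}_\epsilon$ is a stopping time for the natural filtration) and that $\delta_\epsilon$ lies on the same geometric grid as $\Delta_k$ (guaranteed by the choice $\delta_\epsilon = \gamma^{j_\epsilon}\delta_0$ in the proof of Lemma \ref{ass_bcs}(ii)). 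Once these alignments are in place, the conclusion is immediate.
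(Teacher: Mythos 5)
Your proposal is correct and follows essentially the same route as the paper: both invoke the general complexity theorem of \cite[Theorem 2]{bcms} for processes satisfying \cite[Assumption 2.1]{bcms}, verified here via conditions i)--iii) of Lemma \ref{ass_bcs} together with the bias $\mathbb{P}_{k-1}(\indic(\mathcal{G}_k)=1)\geq\pi$ from Lemma \ref{lem_prob}, and then substitute $\Theta=\sigma$ and $\ell(\delta_\epsilon)=\delta_\epsilon^2=\epsilon^2/\xi^2$. The paper merely spells out the renewal-theoretic ingredients (the counter $A_n$, Wald's identity, and the auxiliary lemmas of \cite{bcms}) that you summarize as the ``standard renewal-theoretic cost,'' so there is no substantive difference.
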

	
\begin{proof}
The claim coincides with \cite[Theorem 2]{bcms}, which holds for any  random process $\{(\Phi_k,\Delta_k,W_k)\}_{k\in\mathbb{N}}$ satisfying \cite[Assumption 2.1]{bcms}, i.e., conditions $i), ii), iii)$ stated in  Lemma \ref{ass_bcs}.  
In particular, following \cite{bcms},  let $A_n$ count the iterations for which $\Delta_k\ge \delta_\epsilon$ on  $\mathcal{K}_{\epsilon}>k$, and let $N(k)= \max\{n : A_n\le k\}$.   Lemma 2 in \cite{bcms} shows that 
$\mathbb{E}[N(\mathcal{K}_{\epsilon} - 1) + 1] \le \frac{\psi_0\xi^2}{\sigma \epsilon^2}$.
Then, the Wald's Identity \cite[Theorem 1]{bcms}  gives 
$\mathbb{E}[A_{N(\mathcal{K}_{\epsilon} - 1)+1}] = \frac{\pi}{2\pi-1}\mathbb{E}[N(\mathcal{K}_{\epsilon} - 1) + 1]$, and the claim follows by $A_{N(\mathcal{K}_{\epsilon} - 1)+1} \ge \mathcal{K}_{\epsilon} - 1$ and 
the technical Lemma 1 in \cite{bcms}. 
\end{proof}

An iteration complexity result analogous  to Theorem \ref{teo_E} can be found in \cite[Theorem 3]{Bellavia2023} for 
the algorithm {\sc sirtr} applicable to finite-sum minimization and  based on the combination of the Inexact Restoration framework and the stochastic first-order trust-region method. The probabilistic requirements in \cite{Bellavia2023} are imposed only on the 
stochastic gradients and involve bounds of order $\delta_k$, similarly to (\ref{eq:true3}),
whereas {\sc irerm} requires the more stringent conditions (\ref{eq:true1}) 
and (\ref{eq:true2}). On the other hand, the convergence analysis in \cite{Bellavia2023} is carried  out
under the assumption that the objective function $f$ is bounded from above on a set containing the sequence of the iterates, which is not required in our analysis. 

\section{Numerical experiments}\label{S_exps}
In this section, we present results from our numerical validation of {\sc irerm}.
Following \cite[\S 6.1]{ChenMeniSche18}, we address the solution of the least-squares problem
\begin{equation*}
\min_{x\in\R^n}f(x)=\sum_{i=1}^{m} (f_i(x))^2,
\end{equation*}
where each $f_i$, $i=1, \ldots, m$, is a smooth function and suppose that the function evaluations are corrupted by multiplicative noise, i.e.,
\begin{equation}\label{ls_noisy}
f(x,\xi)=\sum_{i=1}^{m} ((1+\xi_i)f_i(x))^2,
\end{equation}
 with $\xi_i$, $i=1,\ldots,m$, generated from a uniform random distribution on $[-\sigma, \sigma]$ for some $\sigma>0$. We consider 17 nonlinear least-squares problems taken from \cite{Luksan}, which represent the overall set of problems extracted from References n. $8,21,26, 36$ in \cite[Section 2]{Luksan}. 
The selected problems are reported in Table \ref{tab0}. Note that  the dimension of the unknown is set to $n=100$ for each problem.

Since $f$ has the form (\ref{eq:stochastic}), i.e., $f(x)=\expect_{\xi}[f(x,\xi)]$,  we 
apply {\sc irerm} as discussed in Section \ref{S_PF}. Thus we set the constraint function in (\ref{eq:min_vinc}) equal to $h(y)=\sqrt{y}$, 
where $p(y)=\lceil 1/y\rceil\in \mathbb{N}$ represents the number of samples for a given $y\in (0,1] $, and  evaluate function and derivative approximations as in 
(\ref{eq:approximations3}) and (\ref{eq:approximations2}), respectively.

We apply {\sc irerm} and a first-order version of {\sc storm} \cite{ChenMeniSche18} to the problems  in Table \ref{tab0}  and compare the obtained results. Our implementation of the two algorithms is as follows.
Firstly, {\sc irerm} and {\sc storm} are implemented on the basis of the theoretical prescriptions
on function and derivative approximations derived in this work and in \cite{ChenMeniSche18}, respectively;
the resulting algorithms are denoted as
{\sc irerm}\_v1 and {\sc storm}\_v1. Secondly, {\sc irerm} and {\sc storm} are implemented ignoring 
the theoretical prescriptions and following Algorithm 3 in \cite[p. 485]{ChenMeniSche18}; the resulting
algorithms are denoted as {\sc irerm}\_v2 and {\sc storm}\_v2.

Specifically, in  {\sc irerm}\_v1 we set  the number of samples as
$$p(y_{k+1}^t) =  \left\lceil \frac{1}{\mu^2 \min\{h(y_k)^2,\delta_k^4\}}\right\rceil,\quad p(y_{k+1}^g) =  \left\lceil \frac{1}{\mu^2\delta_k^2} \right\rceil,
$$
with $ \mu=0.99$, so that the inequalities (\ref{updateh}) at Step 1 of Algorithm {\sc irerm} are satisfied. As for {\sc storm}\_v1,
we implement Algorithm 1 in \cite[p. 454]{ChenMeniSche18} by computing the average function evaluations with sample size $\left\lceil \frac{1}{\delta_k^4} \right\rceil$ and the average gradient evaluation  with sample size $\left\lceil \frac{1}{\delta_k^2} \right\rceil$, according to the discussion in \cite[\S 5]{ChenMeniSche18}.

On the other hand, following  Algorithm 3 in \cite[p. 485]{ChenMeniSche18}, we adopt
$\max\left \{ 10+k,  \left\lceil  \frac{1}{\delta_k^2} \right\rceil \right \}$
as the   heuristic choice for the number of samples in {\sc storm}\_v2, and employ the same value for $p(y_{k+1}^t)$ and $p(y_{k+1}^g)$ in {\sc irerm}\_v2. Such a choice for the sample sizes allows for the worsening of the infeasibility measure, in the spirit of the IR framework, although it does not strictly comply with Step 1 of Algorithm {\sc irerm}.

In both versions of {\sc irerm}, the parameters are set as $y_0 = 1$, $\theta_0 = 0.9$, $\underline{\theta}=10^{-8}$, $\delta_0=1$, $\delta_{\max}= 10$, 
$\gamma=2$, $\eta=0.1$, $\eta_2=10^{-3}$. Likewise, both versions of {\sc storm} are implemented with $\delta_0=1$, $\delta_{\max}= 10$, $\gamma=2$, $\eta=0.1$, $\eta_2=10^{-3}$. Hence, all algorithms are equipped with the same parameter values, except for the additional parameters $y_0,\theta_0,\underline{\theta}$ required in {\sc irerm}. Note that the parameter values common to {\sc irerm} and {\sc storm} are the same chosen in the numerical experiments of \cite[\S 6]{ChenMeniSche18}.

For each problem and algorithm, we performed $10$ runs using  the level of noise $\sigma=10^{-1}$ as  in \cite[\S 6]{ChenMeniSche18} and the initial guesses given in \cite{Luksan}.
We adopt  the number of samples employed in the average function and gradient approximations as the measure of the cost of the algorithms; more precisely, we use a counter   {\tt Cost} that is increased, at each function or gradient evaluation, by the number of samples employed at such evaluation. Furthermore, we impose a computational budget equal to  $10^5(n+1)$  evaluations  for  {\sc irerm}\_v1 and {\sc storm}\_v1, and equal to $10^4(n+1)$  evaluations  for {\sc irerm}\_v2 and {\sc storm}\_v2. We observe that {\sc irerm} requires three function evaluations and one gradient evaluation, whereas {\sc storm} asks for two function evaluations and one gradient evaluation per iteration. For all algorithms, we stop the iterations when either the maximum computational budget is exceeded or a maximum number of $k_{\max}=500$ iterations is met. Note that the upper bound $k_{\max}$ was never reached in our numerical tests.
\begin{small}
\begin{table}[t!] 
\begin{center}
\begin{tabular}{cclcc}
Problem & Problem name & Source & $n$ & $m$\\ \hline
{\sc p1} & {\sc Chained Rosenbrock function} & \cite[Problem 2.1]{Luksan}\hspace*{5pt} & 100 & $2(n-1)$ \\
{\sc p2} & {\sc Chained Wood function} & \cite[Problem 2.2]{Luksan}\hspace*{5pt} & 100 & $3(n-2)$ \\
{\sc p3} & {\sc Chained Powell singular function} & \cite[Problem 2.3]{Luksan}\hspace*{5pt} & 100 & $2(n-2)$\\
{\sc p4} & {\sc Chained Gragg and Levy function} & \cite[Problem 2.4]{Luksan}\hspace*{5pt} & 100 & $5(n-2)/2$ \\
{\sc p5} & {\sc Generalized Broyden tridiagonal function} & \cite[Problem 2.5]{Luksan}\hspace*{5pt} & 100 & $n$ \\
{\sc p6} & {\sc Generalized Broyden banded function} & \cite[Problem 2.6]{Luksan}\hspace*{5pt} & 100 & $n$ \\
{\sc p7} & {\sc Chained Freudstein and Roth function} & \cite[Problem 2.7]{Luksan}\hspace*{5pt} & 100 & $2(n-1)$ \\
{\sc p8} & {\sc Toint quadratic merging problem} & \cite[Problem 2.9]{Luksan}\hspace*{5pt} & 100 & $3(n-2)$ \\
{\sc p9} & {\sc Chained exponential problem} & \cite[Problem 2.10]{Luksan}\hspace*{5pt} & 100 & $2(n-1)$ \\
{\sc p10} & -- & \cite[Problem 2.53]{Luksan}\hspace*{5pt} & 100 & $n$ \\
{\sc p11} & -- & \cite[Problem 2.71]{Luksan} & 100 & $n$\\
{\sc p12} & -- & \cite[Problem 2.72]{Luksan} & 100 & $n$\\
{\sc p13} & {\sc nondquar} & \cite[Problem 2.79]{Luksan} & 100 & $n$\\
{\sc p14} & {\sc sinquad} & \cite[Problem 2.81]{Luksan} & 100 & $n$\\
{\sc p15} & {\sc edensch} & \cite[Problem 2.82]{Luksan}\hspace*{5pt} & 100 & $3(n-1)$ \\
{\sc p16} & {\sc genhumps} & \cite[Problem 2.83]{Luksan}\hspace*{5pt} & 100 & $3(n-1)$ \\
{\sc p17} & {\sc errinros (modified)} & \cite[Problem 2.84]{Luksan}\hspace*{5pt} & 100 & $2(n-1)$ \\
\hline
\end{tabular}
\end{center}
\caption{Test problems description.}\label{tab0}  
\end{table}
\end{small}

We now summarize the numerical results reported in this section. In Table \ref{tab2}, we show the lowest values of $f$ achieved at termination over the $10$ runs by algorithms  {\sc irerm}\_v1,  {\sc storm}\_v1, {\sc irerm}\_v2,  {\sc storm}\_v2, whereas in Table \ref{tab1} we report the average function value of $f$ and the standard deviation reached  at termination over the 10 runs by all algorithms, respectively. Focusing on problems where both versions {\sc v1} and {\sc v2} of algorithms {\sc irerm} and {\sc storm} show significant differences at termination, in Figure \ref{fig1} (resp. Figure \ref{fig2}) we plot the best runs of {\sc irerm}\_v1 and  {\sc storm}\_v1 (resp. {\sc irerm}\_v2 and  {\sc storm}\_v2), i.e., the runs where the smallest value of the exact function $f$ is achieved at termination. For each problem, we display the value of the exact function $f$ versus the computational cost ({\tt Cost}), and the trust-region radius $\delta_k$ versus the computational cost ({\tt Cost}).

\begin{small}
\begin{table}[h!] 
\begin{center}
\begin{tabular}{c|cc|cc}
Problem& {\sc irerm}\_v1 & {\sc storm}\_v1 & {\sc irerm}\_v2 & {\sc storm}\_v2 \\
\hline
{\sc p1} &  4.93e+01      &   4.90e+01  &   4.73e+01   &  4.78e+01     \\
{\sc p2} &  1.94e+02     &  1.46e+02  &  1.59e+02    &  1.84e+02    \\
{\sc p3} &  4.15e$-$02     &   3.14e$-$02   &    1.17e$-$04    &  6.47e$-$03       \\
{\sc p4} &    1.27e+01      &  1.27e+01     &    1.26e+01    &   1.26e+01    \\
{\sc p5} &  4.18e$-$05       & 8.23e$-$05  &     8.69e$-$07   &  6.29e$-$06    \\ 
{\sc p6} &    2.73e$-$03     & 4.14e$-$03  &     1.84e$-$07    &  2.76e$-$05     \\
{\sc p7} &  6.01e+03     &   6.01e+03   &   6.00e+03   &   6.00e+03     \\
{\sc p8} &  2.17e+02     &   2.17e+02  &   2.16e+02   &  2.17e+02    \\
{\sc p9} &    1.96e+01    & 1.96e+01  &   1.91e+01  &    1.91e+01   \\
{\sc p10} &   2.63e$-$08     &  7.12e$-$08 & 9.48e$-$08   &   1.40e$-$08    \\
{\sc p11} &   4.68e$-$06     & 1.54e$-$05  &  2.61e$-$08   &   4.83e$-$07    \\
{\sc p12} &   1.29e$-$05     & 1.02e$-$04  &   4.54e$-$07   &  3.30e$-$06    \\
{\sc p13} &   2.96e$-$01    &  2.23e$-$01  & 1.84e$-$01   &   1.54e$-$01    \\
{\sc p14} &   1.16e$-$01     & 1.29e$-$01  &  7.86e$-$02   &   7.06e$-$02   \\
{\sc p15} &  2.94e+02     &   2.94e+02 &   2.94e+02  &  2.94e+02    \\
{\sc p16} &  1.25e+06      &   6.74e+05 &   1.84e+03   &  3.38e+03     \\
{\sc p17} &  3.92e+01      & 3.92e+01  &  3.91e+01   &   3.90e+01     \\
\hline
\end{tabular}
\end{center}
\caption{Lowest value of $f$ reached at termination over 10 runs for {\rm{\sc irerm}} and {\rm{\sc storm}}.}\label{tab2}
\end{table}
\end{small}

\begin{small}
\begin{table}[h!]
\begin{center}
\begin{tabular}{c|cc|cc}
Problem& {\sc irerm}\_v1 & {\sc storm}\_v1 & {\sc irerm}\_v2 & {\sc storm}\_v2 \\
\hline
{\sc p1}  & 4.93e+01$\pm$3.79e$-$02 & 4.92e+01$\pm$1.01e$-$01 & 4.78e+01$\pm$5.40e$-$01   &  4.85e+01$\pm$6.74e$-$01 \\
 {\sc p2} & 1.95e+02$\pm$1.19e+00 &  1.56e+02$\pm$5.73e+00 & 1.82e+02$\pm$1.07e+01    & 1.87e+02$\pm$1.03e+00   \\
{\sc p3} & 9.63e$-$02$\pm$7.01e$-$02   & 7.11e$-$02$\pm$2.34e$-$02  & 2.85e$-$02$\pm$3.01e$-$02  & 8.31e$-$03$\pm$1.45e$-$03 \\
{\sc p4}  & 1.30e+01$\pm$7.81e$-$01     & 1.27e+01$\pm$5.58e$-$03  & 1.27e+01$\pm$1.75e$-$01   & 1.26e+01$\pm$6.52e$-$03 \\ 
{\sc p5} & 2.06e$-$03$\pm$8.64e$-$04  & 1.58e$-$03$\pm$1.06e$-$03 & 1.81e$-$05$\pm$1.61e$-$05    & 2.59e$-$05$\pm$2.11e$-$05 \\ 
{\sc p6} &  6.76e$-$03$\pm$3.24e$-$03      & 1.03e$-$02$\pm$3.20e$-$03  &  5.45e$-$05$\pm$7.39e$-$05    & 1.89e$-$04$\pm$8.23e$-$05     \\
{\sc p7} &  6.01e+03$\pm$6.77e$-$02      & 6.01e+03$\pm$7.17e$-$02  &  6.00e+03$\pm$3.26e+00   & 6.01e+03$\pm$1.12e+00     \\
{\sc p8} &  4.15e+03$\pm$6.22e+03     & 2.17e+02$\pm$8.22e$-$03  & 2.46e+02$\pm$2.52e+01   &  2.17e+02$\pm$5.18e$-$03   \\
{\sc p9} & 1.97e+01$\pm$5.36e$-$02      & 1.97e+01$\pm$6.86e$-$02  & 1.92e+01$\pm$4.67e$-$02    & 1.91e+01$\pm$1.29e$-$02      \\
{\sc p10} & 2.80e$-$05$\pm$3.64e$-$05       &  1.32e$-$05$\pm$1.21e$-$05 & 9.92e$-$07$\pm$5.61e$-$07  &  1.87e$-$07$\pm$1.62e$-$07    \\
{\sc p11} & 6.85e$-$05$\pm$5.05e$-$05      & 3.01e$-$05$\pm$1.12e$-$05  &  1.86e$-$06$\pm$1.48e$-$06    & 9.16e$-$07$\pm$2.47e$-$07  \\
{\sc p12} & 1.65e$-$04$\pm$1.54e$-$04      & 3.05e$-$04$\pm$1.21e$-$04  & 8.94e$-$06$\pm$1.57e$-$05 & 8.35e$-$06$\pm$3.24e$-$06     \\
{\sc p13} &  3.23e$-$01$\pm$2.66e$-$02     & 3.03e$-$01$\pm$2.85e$-$02  & 1.90e$-$01$\pm$6.66e$-$03     & 1.59e$-$01$\pm$4.94e$-$03   \\
{\sc p14} &  1.25e$-$01$\pm$6.48e$-$03     & 1.30e$-$01$\pm$1.38e$-$04  & 8.08e$-$02$\pm$1.62e$-$03    & 7.28e$-$02$\pm$1.06e$-$03    \\
{\sc p15} & 2.94e+02$\pm$1.33e$-$02      &  2.94e+02$\pm$8.39e$-$03 & 2.94e+02$\pm$1.46e$-$05  &  2.94e+02$\pm$2.07e$-$02    \\
{\sc p16} &  1.27e+06$\pm$6.14e+03     & 1.13e+06$\pm$1.68e+05  & 2.04e+03$\pm$1.08e+02     &  3.88e+03$\pm$3.65e+02   \\
{\sc p17} &  5.13e+01$\pm$1.61e+01     & 3.93e+01$\pm$4.00e$-$02  &  3.97e+01$\pm$9.28e$-$01   & 3.91e+01$\pm$3.10e$-$02    \\
\hline
\end{tabular}
\end{center}
\caption{Average function value $f$ and standard deviation at termination over 10 runs for {\rm{\sc irerm}} and {\rm{\sc storm}}.}\label{tab1}
\end{table}
\end{small}

\begin{figure}[t!]
\begin{center}
\begin{tabular}{ccc}
\includegraphics[scale = 0.3]{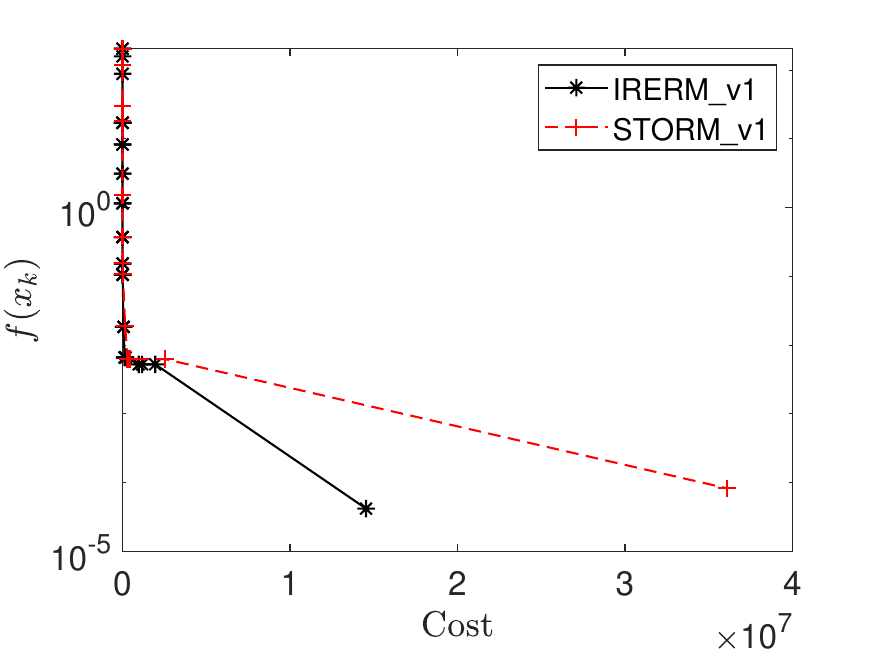}  & \includegraphics[scale = 0.3]{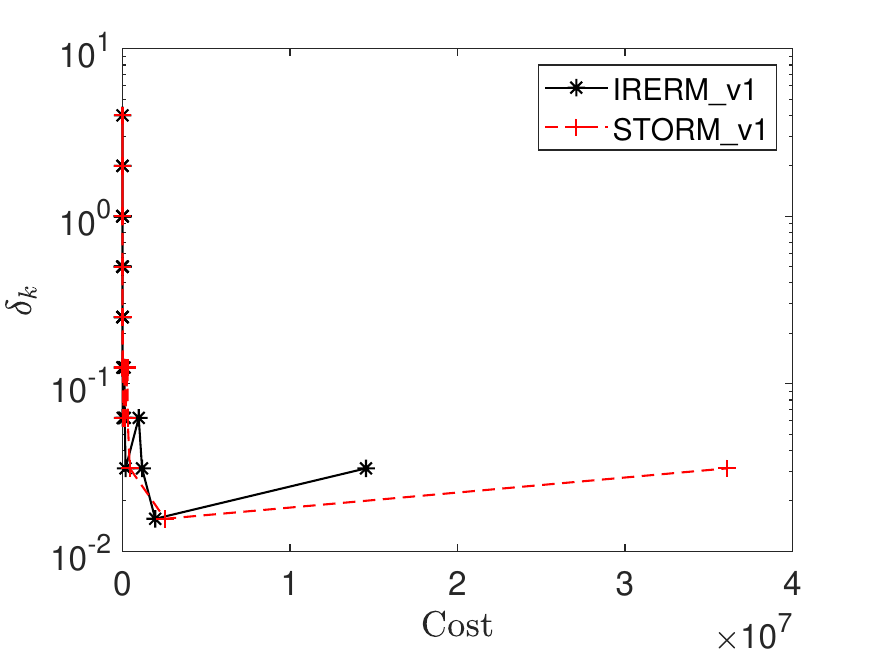}\\
\includegraphics[scale = 0.3]{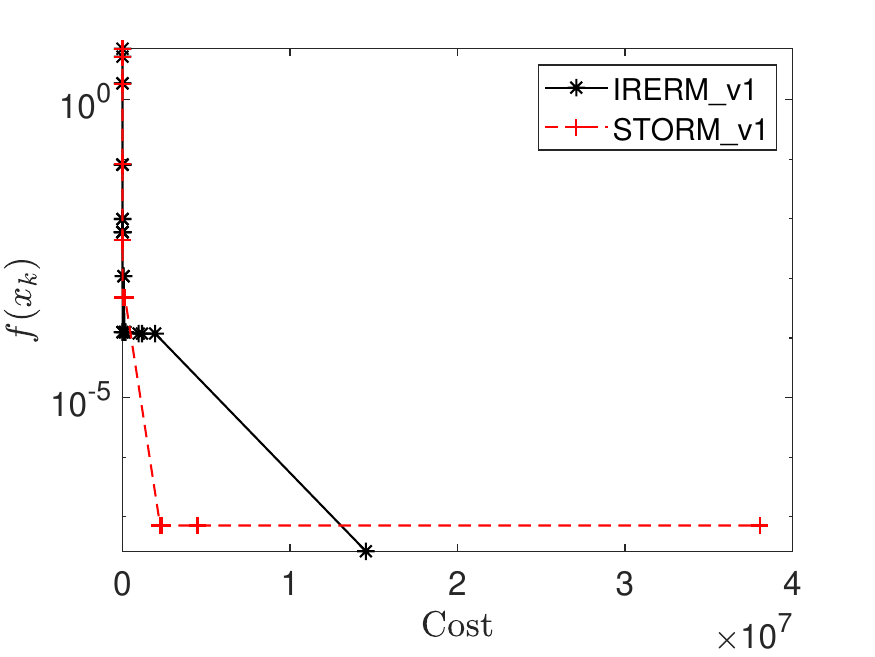}  & \includegraphics[scale = 0.3]{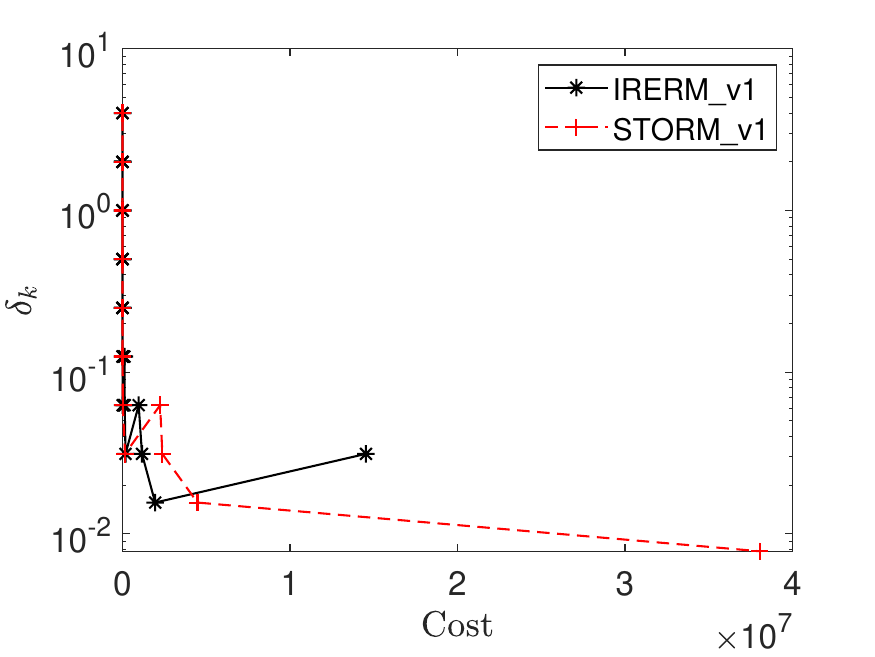}\\
\includegraphics[scale = 0.3]{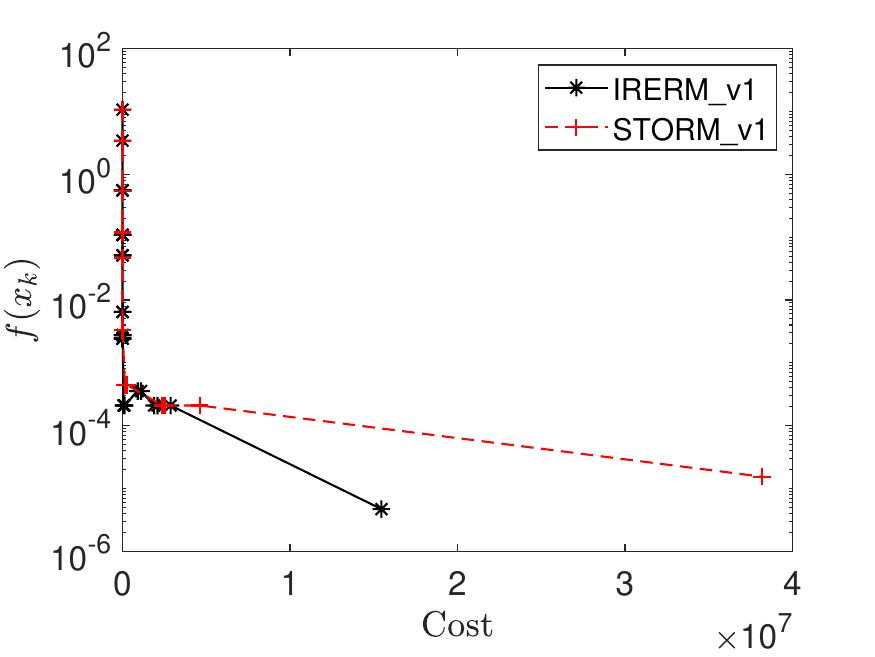}  & \includegraphics[scale = 0.3]{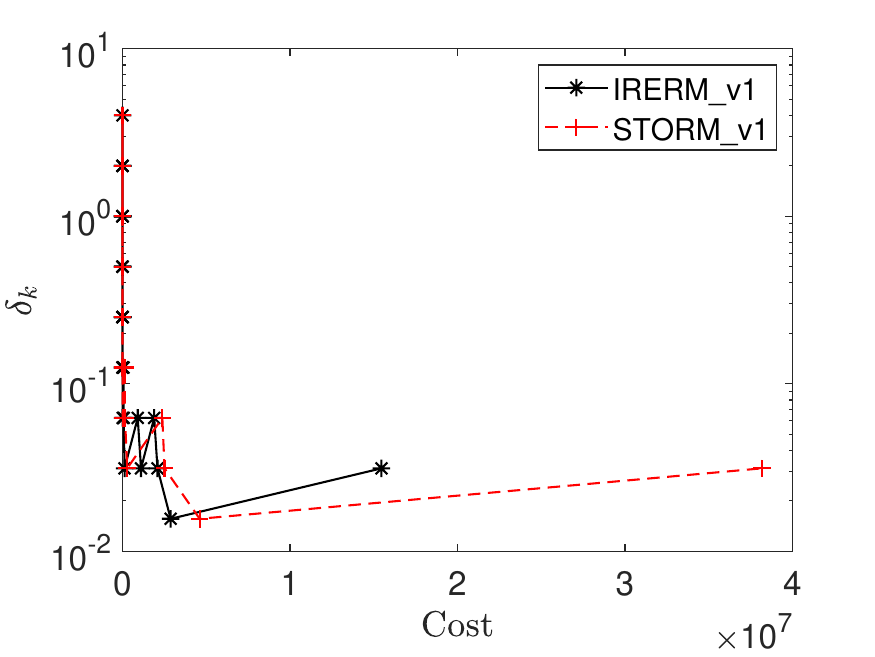}\\
\includegraphics[scale = 0.3]{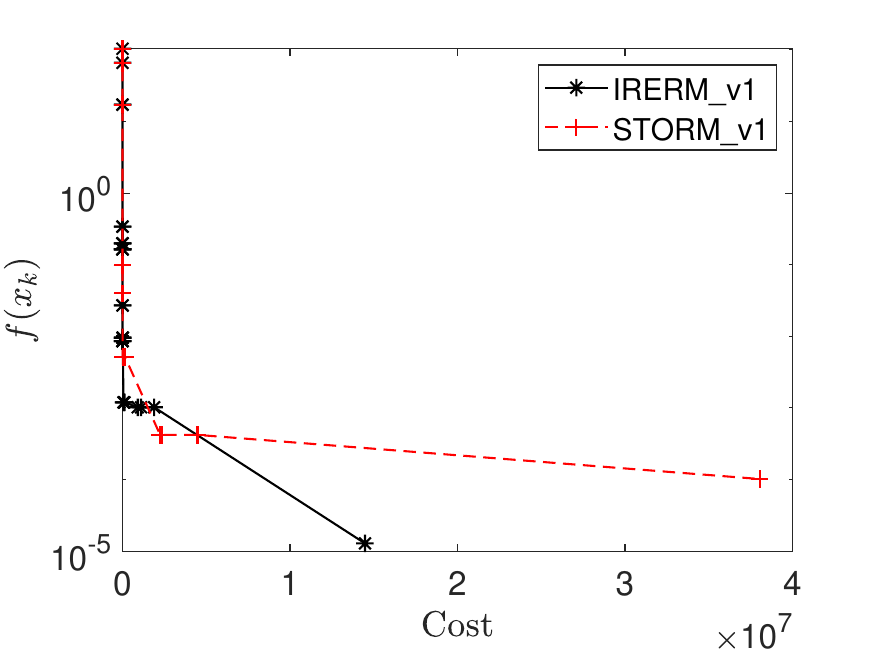}  & \includegraphics[scale = 0.3]{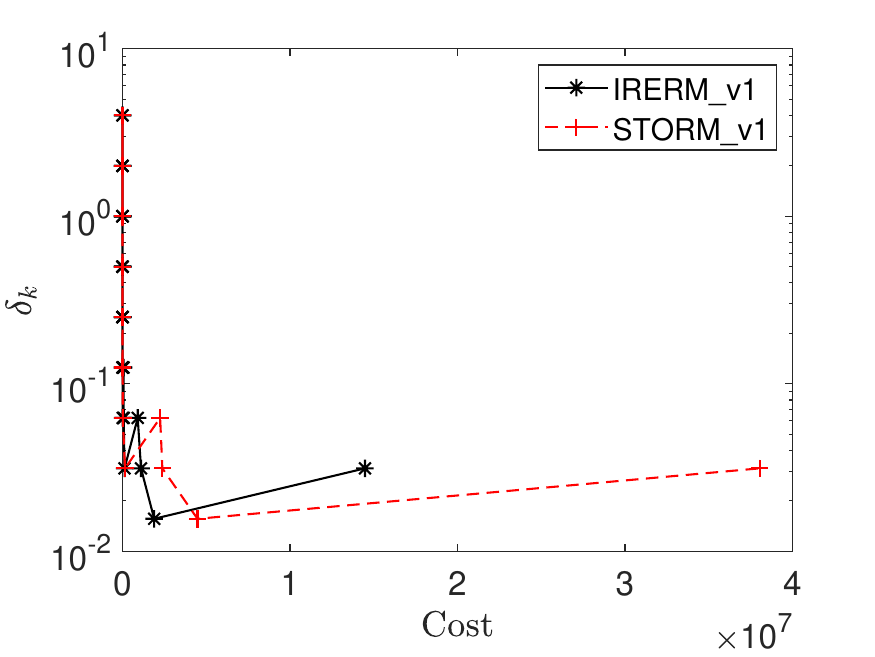}
\end{tabular}
\end{center}
\caption{Best runs of {\rm{\sc irerm}\_v1} and {\rm{\sc storm}\_v1} in terms of value of $f$ at termination. From left to right: value of true function $f$ vs computational budget and trust-region radius $\delta_k$ vs computational budget.
From top to bottom: Problems {\sc p5}, {\sc p10}, {\sc p11}, {\sc p12}.}\label{fig1}
\end{figure}

\begin{figure}[t!]
\begin{center}
\begin{tabular}{ccc}
\includegraphics[scale = 0.3]{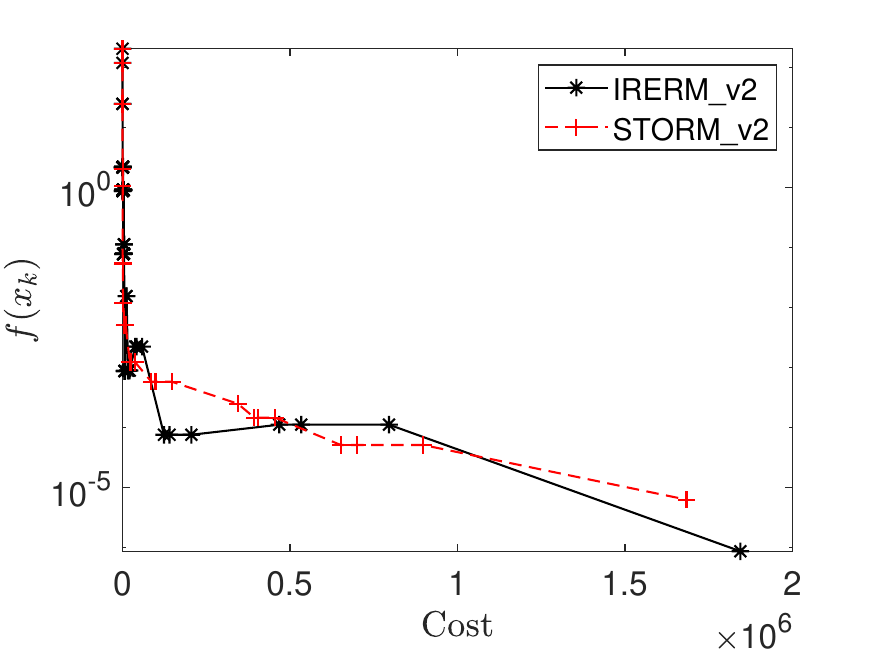}  & \includegraphics[scale = 0.3]{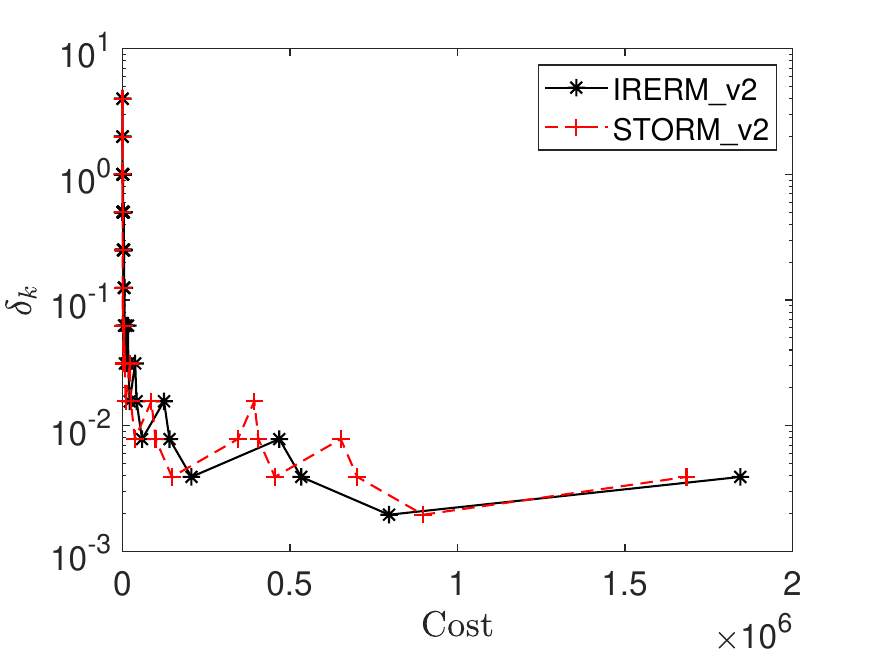}\\
\includegraphics[scale = 0.3]{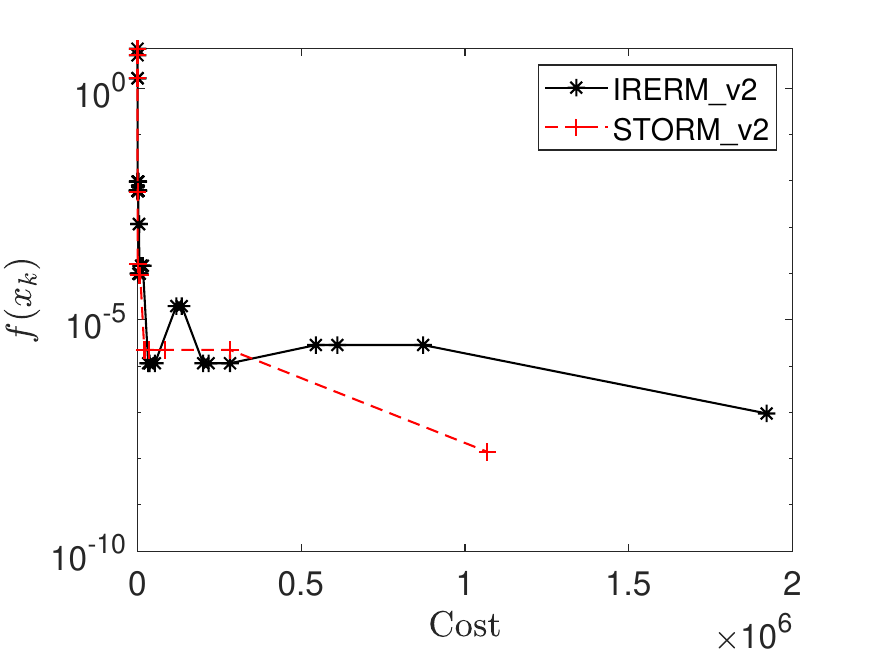}  & \includegraphics[scale = 0.3]{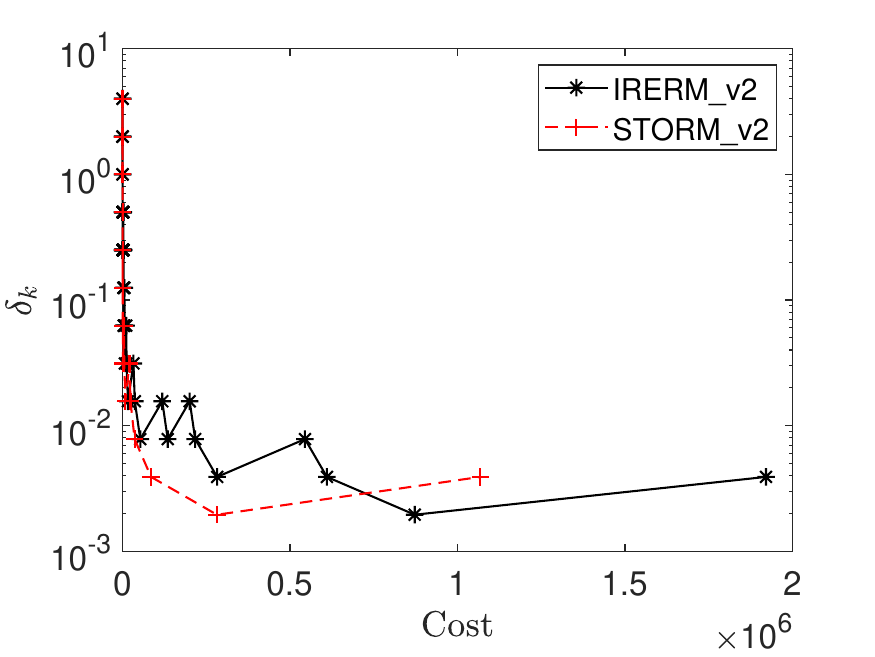}\\
\includegraphics[scale = 0.3]{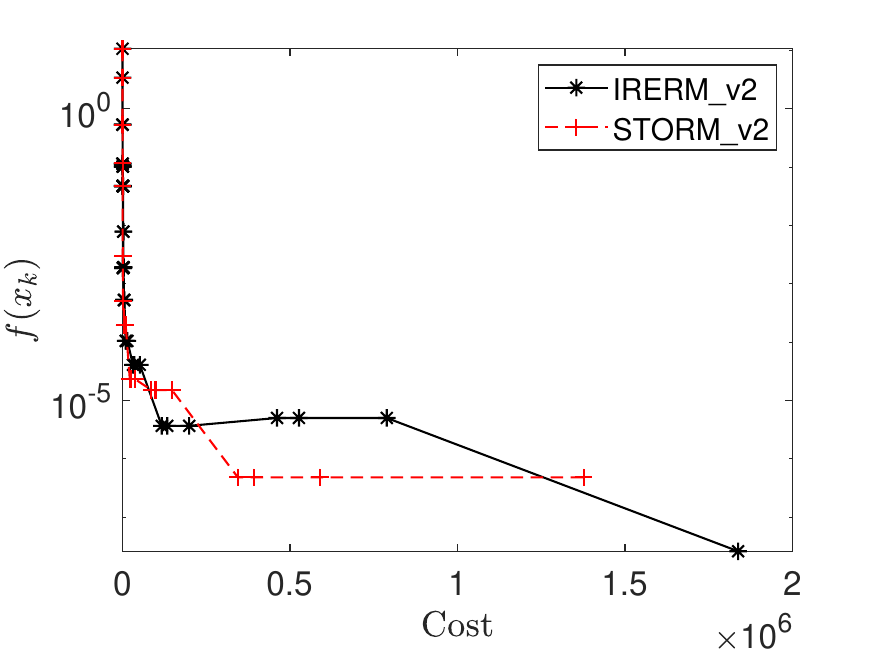}  & \includegraphics[scale = 0.3]{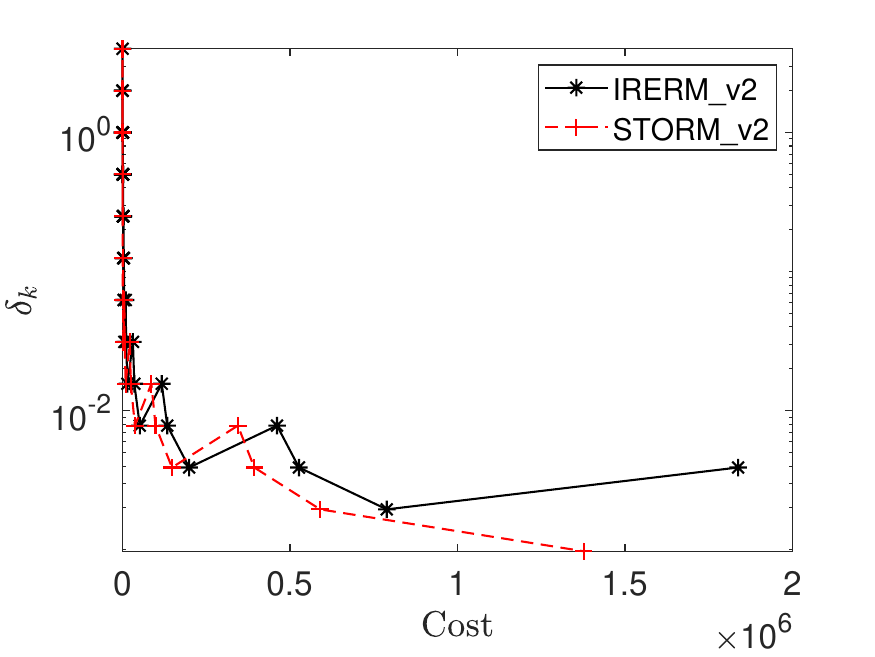}\\
\includegraphics[scale = 0.3]{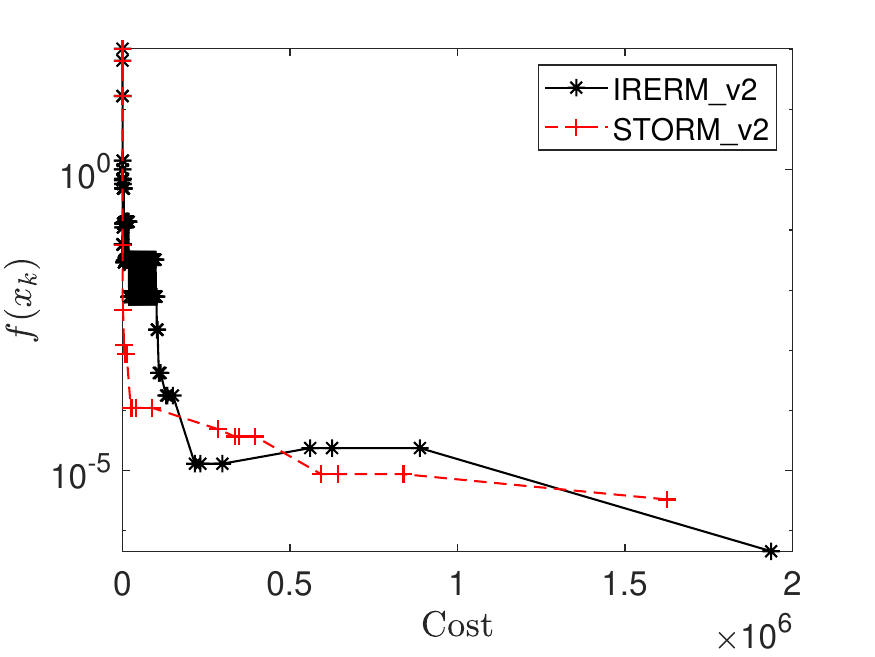}  & \includegraphics[scale = 0.3]{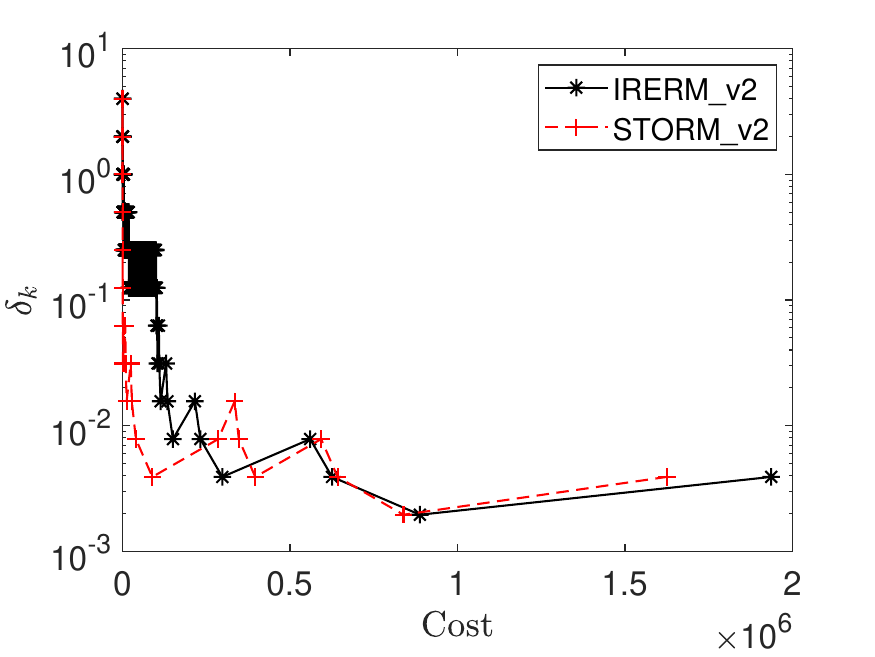}
\end{tabular}
\end{center}
\caption{Best runs of {\rm{\sc irerm}\_v2} and {\rm{\sc storm}\_v2} in terms of value of $f$ at termination. From left to right: value of true function $f$ vs computational budget and trust-region radius $\delta_k$ vs computational budget. From top to bottom: Problems {\sc p5}, {\sc p10}, {\sc p11}, {\sc p12}.}\label{fig2}
\end{figure}

Comparing the smallest value of $f$ achieved by {\sc irerm}\_v1  and {\sc storm}\_v1, we observe in Figure \ref{fig1}  
and Table \ref{tab2} that the two solvers show comparable performances over the selected problems. With respect to {\sc storm}\_v1,
{\sc irerm}\_v1 reaches the smallest value $9$ times out of $17$; notably, the minimum value obtained by {\sc irerm} is at least half of that of {\sc storm} on $4$ problems ({\sc p5}, {\sc p10}, {\sc p11}, {\sc p12}). On the other hand,   in the case where {\sc storm}\_v1 reaches the smallest value of $f$ at termination, such value is comparable with the value attained by {\sc irerm}\_v1, except for  problem {\sc p16}. In most iterations, the trust-region radius selected in {\sc irerm}\_v1 is larger than in {\sc storm}\_v1; this fact can be ascribed to
the acceptance rule of the iterates, which depends on the decrease of a convex combination of the values of $\bar f$ and $h$, instead of 
using only the values $\bar f$ as in {\sc storm}\_v1. We observe that larger trust-region radii generally imply looser accuracy requirements on functions and derivatives. This feature compensates the per iteration extra cost or {\sc irerm}  compared to the per iteration cost of {\sc storm} and makes {\sc irerm} competitive with {\sc storm}. The average value of $f$ reached at termination and the standard deviation of the two algorithms is summarized in Table \ref{tab1}. The average values obtained by {\sc irerm}\_v1 and {\sc storm}\_v1 are similar in most cases, with the exception of problems {\sc p6} and {\sc p8}.

Comparing the smallest value of $f$ achieved by {\sc irerm}\_v2  and {\sc storm}\_v2 displayed in Figure \ref{fig2}
and Table \ref{tab2}, we note that 
{\sc irerm}\_v2 reaches the smallest value $12$ times out of $17$ and compares well with {\sc storm}\_v2
in the remaining problems. In several iterations, the trust-region radius selected in {\sc irerm}\_v2 is larger than in {\sc storm}\_v2. Table \ref{tab1} shows that the average values  over 10 runs  are similar for {\sc irerm}\_v2 and {\sc storm}\_v2 except for problems {\sc p6} and {\sc p10}.   

The obtained results confirm that solving an unconstrained  noisy problem via a suitable constrained formulation and the Inexact Restoration approach is viable and compares well with {\sc storm} algorithm. Interestingly, the best results over multiple runs are good even for {\sc irerm\_v2}, which represents an heuristic version of our algorithm where the theoretical prescriptions are ignored. We also note that the proposed algorithm tends to compute larger trust-region radii than those of {\sc storm}, which leads in some cases to smaller values of $f$ at termination of the best runs.\\

\section{Conclusions and future work}
We proposed a trust-region algorithm with first-order random models suitable for the minimization of unconstrained noisy functions. The proposed algorithm, named {\sc irerm},  is based on a constrained reformulation of the minimization problem typical of the Inexact Restoration approach \cite{AEP, BM, BHM, FF}, where satisfying the constraint is equivalent to computing  both the function and its gradient exactly. To our knowledge, this is the first attempt to adopt  
the Inexact Restoration approach for a general problem where the evaluation of both the objective function and its gradient is random and sufficiently accurate in probability,
and to provide a theoretical analysis under conditions viable for the context. At each iteration, {\sc irerm} first enforces the sufficient accuracy in probability of the function and gradient estimates, and then employs an acceptance test involving both the noisy function and the infeasibility measure. Based on  
the theoretical analysis in \cite{bcms,ChenMeniSche18}, we derived an upper bound on the expected number of iterations needed to achieve an approximate first-order optimality point.  A preliminary numerical illustration  
of  {\sc irerm} on a collection of nonlinear least-squares problems
is also presented. As future work, we plan to deepen the numerical investigation of  {\sc irerm}  as well as design a generalized version of {\sc irerm} with second-order random models.

\section{Appendix}
This appendix is devoted to the proof of Theorem \ref{teo_limidelta0}, whose arguments follow closely those of \cite[Theorem 4.11]{ChenMeniSche18}. To this aim, we need the following four technical lemmas.

\begin{lemma}\label{lemtech1} 
Suppose Assumption \ref{ass_gradf} holds and that iteration $k$ is successful. Then we have 
\begin{equation}\label{eq:descent}
f(x_{k+1})-f(x_k)\leq \|\nabla f(x_k)\|\delta_k+L\delta_k^2.
\end{equation}
\end{lemma}
\begin{proof}
If the iteration is successful, then $x_{k+1}=x_k+p_k$. Applying the Descent Lemma for functions with Lipschitz continuous gradient to $f$ and using  $p_k=-\delta_k g_k/\|g_k\|$ yields 
$$
f(x_{k+1})-f(x_k)\leq \nabla f(x_k)^Tp_k+L\|p_k\|^2\leq \|\nabla f(x_k)\|\delta_k+L\delta_k^2. 
$$
\end{proof}
\vskip 5pt
\begin{lemma}\label{lemtech2}
Suppose Assumption \ref{ass_fh} holds and let  $\psi_k$ be defined as in (\ref{eq:Lyapunov}) and (\ref{eq:phik}), with the constant $\Sigma$ chosen as in (\ref{eq:sigma}). 
The following facts hold true.
\\
$(i)$ It holds
\begin{align}\label{eq:phi_diff}
\psi_{k+1}-\psi_k&\leq v\left(\theta_{k+1} ({ f(x_{k+1})}-{ f(x_{k})})+(1-\theta_{k+1})(h(y_{k+1})-h(y_k))\right)\nonumber\\
&\quad +(1-v)(\delta_{k+1}^2-\delta_k^2).
\end{align}
\vskip 5pt \noindent
$(ii)$ If iteration $k$ is true and successful, then
\begin{equation}\label{eq:phi_diff_3}
\psi_{k+1}-\psi_k\leq -v\eta_1 \theta_{k+1}\delta_k\|g_k\|+(1-v)(\gamma^2-1)\delta_k^2+2v \theta_{k+1}\kappa\delta_k^2.
\end{equation}
\vskip 5pt \noindent
$(iii)$ If iteration $k$ is unsuccessful, then
\begin{equation}\label{eq:unsucc_2}
\psi_{k+1}-\psi_k\leq (1-v)\left(\frac{1-\gamma^2}{\gamma^2}\right)\delta_k^2<0.
\end{equation}
\end{lemma}
\begin{proof}{} (i) The successive difference $\psi_{k+1}-\psi_k$ can be written as 
\begin{align}
\psi_{k+1}-\psi_k&= v(\theta_{k+1}  f(x_{k+1}) +(1-\theta_{k+1})h(y_{k+1}))\nonumber\\
&\quad -v(\theta_{k}{ f(x_k)}+(1-\theta_{k})h(y_{k}))+v(\theta_{k+1}-\theta_k)\Sigma+(1-v)(\delta_{k+1}^2-\delta_k^2)\nonumber\\
&= v\left(\theta_{k+1} ({ f(x_{k+1})}-{ f(x_{k})})+(1-\theta_{k+1})(h(y_{k+1})-h(y_k))\right)\nonumber\\
&\quad +v(\theta_{k+1}-\theta_k)({ f(x_k)}-h(y_k)+\Sigma)+(1-v)(\delta_{k+1}^2-\delta_k^2)\nonumber\\
&\leq v\left(\theta_{k+1} ({ f(x_{k+1})}-{ f(x_{k})})+(1-\theta_{k+1})(h(y_{k+1})-h(y_k))\right)\nonumber\\
&\quad +(1-v)(\delta_{k+1}^2-\delta_k^2),\nonumber
\end{align}
where the last inequality follows from \eqref{eq:sigma} and the monotonicity 
of $\{\theta_k\}$ (see Lemma \ref{lem:thetak}(i)). 

(ii) The occurrence of successful iteration \eqref{eq:succ} and inequality \eqref{eq:phi_diff} yield
\begin{align*}
\psi_{k+1}-\psi_k&\leq v( \theta_{k+1}(f(x_k+p_k)-f(x_k))+(1-\theta_{k+1})(h(y_{k+1})-h(y_k)))\\
&+(1-v)(\gamma^2-1)\delta_k^2.
\end{align*}
By summing and subtracting the quantities $v \theta_{k+1}\bar{f}_k^*$ and $v \theta_{k+1}\bar f_k^p$ in the above inequality, we get
\begin{align*}
\psi_{k+1}-\psi_k&\leq v(\theta_{k+1}( \bar f_k^p-\bar{f}_k^*)+(1- \theta_{k+1})(h( y_{k+1})-h(y_k)))+(1-v)(\gamma^2-1)\delta_k^2\\
& \ \ +v \theta_{k+1} (f(x_k+p_k)- \bar f_k^p+\bar f_k^*-f(x_k))\\
&\leq v( \theta_{k+1} ( \bar f_k^p-\bar f_k^*)+(1- \theta_{k+1})(h( y_{k+1})-h(y_k)))+(1-v)(\gamma^2-1)\delta_k^2\\
&\ \ +v \theta_{k+1}(|f(x_k+p_k)- \bar{f}_k^p|+|\bar{f}_k^*-f(x_k)|).
\end{align*}
By the definition of $\Aredk$ in \eqref{eq:ared},
the fact that $y_{k+1}=y_{k+1}^t$ due to $k$ being successful, and  \eqref{eq:true1}-\eqref{eq:true2}, we get
\begin{equation}\label{eq:phi_diff_2}
\psi_{k+1}-\psi_k\leq -v\Aredk(x_k+p_k,\theta_{k+1})+(1-v)(\gamma^2-1)\delta_k^2+2v \theta_{k+1}\kappa\delta_k^2.
\end{equation}
Since we know that \eqref{eq:ared_successfulbis} holds for successful iterations, from the previous inequality we easily get  
\eqref{eq:phi_diff_3}.

(iii)
If $k$ is unsuccessful, the  claim follows from \eqref{eq:phi_diff} and  $x_{k+1}=x_k$, $y_{k+1}=y_{k}$, $\delta_{k+1}= \delta_k/\gamma$.
\end{proof}

\vskip 5pt

To proceed, we distinguish the case $\|\nabla f(X_k)\|\geq \zeta \Delta_k$ from the case $\|\nabla f(X_k)\|< \zeta \Delta_k$, where $\zeta$ is defined in (\ref{eq:zeta_condition}).
\vskip 5pt
\noindent
\begin{lemma}\label{lemtech3}
Under the Assumptions of Theorem \ref{teo_limidelta0}, it holds
\begin{equation}\label{eq:phi_diff_8}
\Ek[\Psi_{k+1}-\Psi_k\, |\, \|\nabla f(X_k)\|\geq \zeta \Delta_k] \leq -(1-v)(\gamma^2-1)\Delta_k^2,
\end{equation}
with $\zeta$ as in (\ref{eq:zeta_condition}).
\end{lemma}
\begin{proof}
Let $x_k$, $\delta_k$ and $\psi_k$ denote the realizations of the random variables $X_k, \Delta_k$ and  $\Psi_k$, respectively.
Consider an arbitrary realization of Algorithm \nome and the case $\|\nabla f(x_k)\|\geq \zeta \delta_k$. We analyze separately true and false iterations.
If the iteration $k$ is true, we can combine \eqref{eq:true3} with $\|\nabla f(x_k)\|\geq \zeta \delta_k$ to obtain
\begin{equation}\label{eq:gk_succ}
\|g_k\|\geq \|\nabla f(x_k)\|-\kappa\delta_k\geq (\zeta-\kappa)\delta_k\geq \max\left\{\eta_2,\frac{\theta_0(3\kappa+L)+\mu}{\underline{\theta}(1-\eta_1)}\right\}\delta_k,
\end{equation} 
where the last inequality is due to condition \eqref{eq:zeta_condition} and the upper bound $\mu\leq \kappa \bar{\mu}_{up}$, which follows from the hypothesis $\mu\leq \kappa\bar{\mu}(\pi^{\frac{1}{4}})$ and Assumption \ref{ass_fh}(ii). Since $k$ is true, $\kappa$ satisfies condition \eqref{eq:kappa_condition}, and inequality \eqref{eq:gk_succ} holds, Lemma \ref{lemma_succ} implies that $k$ is successful and consequently \eqref{eq:phi_diff_3}  holds.

By exploiting once again the fact that $k$ is true and $\|\nabla f(x_k)\|\geq \zeta \delta_k$, we observe that
\begin{equation*}
\|g_k\|\geq \|\nabla f(x_k)\|-\kappa \delta_k\geq \left(1-\frac{\kappa}{\zeta}\right)\|\nabla f(x_k)\|,
\end{equation*}
and \eqref{eq:phi_diff_3} yields 
\begin{align}
\psi_{k+1}-\psi_k
&\leq -v\eta_1 \theta_{k+1} \left(1-\frac{\kappa}{\zeta}\right)\|\nabla f(x_k)\|\delta_k+(1-v)(\gamma^2-1)\delta_k^2+\frac{2v \theta_{k+1}\kappa\delta_k\|\nabla f(x_k)\|}{\zeta} \nonumber \\
& =v \theta_{k+1}\left(\frac{2\kappa}{\zeta}-\eta_1\left(1-\frac{\kappa}{\zeta}\right)\right)\|\nabla f(x_k)\|\delta_k+(1-v)(\gamma^2-1)\delta_k^2. \nonumber
\end{align}
Note that $\frac{2\kappa}{\zeta}-\eta_1\left(1-\frac{\kappa}{\zeta}\right) < 0$
due to  \eqref{eq:zeta_condition}, and $\theta_{k+1}=\theta_{k+1}^t\geq \underline{\theta}$ since the acceptance condition \eqref{eq:accept3} holds when $k$ is successful. Then, by the definition of $C_1$ in \eqref{eq:C1}, it follows
\begin{equation}\label{eq:phi_diff_5}
\psi_{k+1}-\psi_k\leq -vC_1\delta_k\|\nabla f(x_k)\|+(1-v)(\gamma^2-1)\delta_k^2,
\end{equation}
and by $\|\nabla f(x_k)\|\geq \zeta \delta_k$ and (\ref{eq:v_condition}) we have that 
$$
-vC_1\delta_k\|\nabla f(x_k)\|+(1-v)(\gamma^2-1)\delta_k^2\le (-vC_1\zeta+(1-v)(\gamma^2-1))\delta_k^2<0.
$$

If the iteration $k$ is false, $k$ can be either a successful or an unsuccessful iteration. 
If $k$ is successful, using \eqref{eq:descent} and $\|\nabla f(x_k)\|\geq \zeta \delta_k$ we obtain
\begin{equation}\label{eq:bound_lemma}
f(x_{k+1})-f(x_k)\leq \left(1+\frac{L}{\zeta}\right)\|\nabla f(x_k)\|\delta_k.
\end{equation}
Furthermore, we have
\begin{equation}\label{eq:bound_h}
h(y_{k+1})-h(y_k)=h(y_{k+1}^t)-h(y_k)\leq \mu \delta_k^2,
\end{equation}
due to \eqref{updateh}. By plugging \eqref{eq:bound_lemma} and \eqref{eq:bound_h} inside \eqref{eq:phi_diff}, we get
\begin{align}\label{eq:phi_diff_6}
\psi_{k+1}-\psi_k&\leq v\left(\theta_{k+1}\left(1+\frac{L}{\zeta}\right)\|\nabla f(x_k)\|\delta_k+(1-\theta_{k+1})\mu\delta_k^2\right)+(1-v)(\gamma^2-1)\delta_k^2
\nonumber\\
&\leq v C_2\|\nabla f(x_k)\|\delta_k+(1-v)(\gamma^2-1)\delta_k^2,
\end{align}
where the second inequality follows from $\|\nabla f(x_k)\|\geq \zeta\delta_k$, the bounds $0<\theta_{k+1}< 1$, the hypothesis $\mu\leq \kappa \bar{\mu}(\pi^{\frac{1}{4}})\leq \kappa \bar{\mu}_{up}$ and the definition of $C_2$ in \eqref{eq:C1}.

If $k$ is unsuccessful, then \eqref{eq:unsucc_2} holds. Since the right-hand side of \eqref{eq:phi_diff_6} is non-negative, we conclude that inequality \eqref{eq:phi_diff_6} holds either if $k$ is successful or unsuccessful.

Let $\Psi_k$ be the random variable with realization $\psi_k$.
We are now ready to consider the expected value of $\Psi_{k+1}-\Psi_k$ conditioned to the past and the event $\{\|\nabla f(X_k)\|\geq \zeta \Delta_k\}$. By recalling  Lemma \ref{lem_prob} and combining \eqref{eq:phi_diff_5} and \eqref{eq:phi_diff_6}, we obtain
\begin{eqnarray}
& & \Ek[\Psi_{k+1}-\Psi_k\, |\, \|\nabla f(X_k)\| \geq \zeta \Delta_k]   \nonumber \\
& & \quad \qquad \leq \pi(-vC_1\Delta_k\|\nabla f(X_k)\|+(1-v)(\gamma^2-1)\Delta_k^2)+\nonumber\\
&& \quad \qquad  (1-\pi)(vC_2\|\nabla f(X_k)\|\Delta_k+(1-v)(\gamma^2-1)\Delta_k^2)\nonumber\\
& & \quad \qquad  = v\|\nabla f(X_k)\|\Delta_k(-C_1\pi+(1-\pi)C_2)+(1-v)(\gamma^2-1)\Delta_k^2.\label{eq:phi_diff_7}
\end{eqnarray}
The choice of  $v$ in \eqref{eq:v_condition} and the choice of  $\pi$ in \eqref{eq:pi_condition} imply
$\displaystyle\frac{C_1}{2}\geq \frac{2\gamma^2(1-v)}{v\zeta}$ and 
$\displaystyle C_1\pi - (1-\pi)C_2\geq \frac{C_1}{2}$, respectively. Consequently
\begin{equation}\label{eq:tech_1}
C_1\pi - (1-\pi)C_2\geq \frac{2(\gamma^2-1)(1-v)}{v\zeta}.
\end{equation}
Thus \eqref{eq:phi_diff_7} leads to
\begin{align*}
\Ek[\Psi_{k+1}-\Psi_k\, |\, \|\nabla f(X_k)\|\geq \zeta \Delta_k]
&\leq-\frac{2(1-v)(\gamma^2-1)}{\zeta}\|\nabla f(X_k)\|\Delta_k \\
& \qquad +(1-v)(\gamma^2-1)\Delta_k^2\\
&\leq -2(1-v)(\gamma^2-1)\Delta_k^2+(1-v)(\gamma^2-1)\Delta_k^2\\
&=-(1-v)(\gamma^2-1)\Delta_k^2,
\end{align*}
where the third inequality is again due to $\|\nabla f(X_k)\|\geq \zeta \Delta_k$.
\end{proof}

\vskip 5pt
Now we analyze the case $\|\nabla f(X_k)\|< \zeta \Delta_k$, where $\zeta$ is again defined in (\ref{eq:zeta_condition}).
\vskip 5pt
\noindent
\begin{lemma}\label{lemtech4}
Under the Assumptions of Theorem \ref{teo_limidelta0}, it holds
\begin{equation}\label{eq:phi_diff_12}
\Ek[\Psi_{k+1}-\Psi_k\, |\, \|\nabla f(X_k)\|<\zeta\Delta_k]\leq -\frac{1}{2}(1-v)\left(\frac{\gamma^2-1}{\gamma^2}\right)\Delta_k^2,
\end{equation}
with $\zeta$ as in (\ref{eq:zeta_condition}).
\end{lemma}
\begin{proof}
Let $x_k$, $\delta_k$ and $\psi_k$ denote the realizations of the random variables $X_k, \Delta_k$ and  $\Psi_k$, respectively.
Consider an arbitrary realization of Algorithm \nome and the case $\|\nabla f(x_k)\|< \zeta \delta_k$. We analyze separately true and false iterations.

If the iteration $k$ is true and  successful, then inequality (\ref{eq:phi_diff_3}) holds.
Thus, (\ref{eq:phi_diff_3}) and condition (\ref{eq:accept2})  
yield
\begin{align*}
\psi_{k+1}-\psi_k&\leq -v\eta_1\eta_2 \theta_{k+1}\delta_k^2+(1-v)(\gamma^2-1)\delta_k^2+2v \theta_{k+1}\kappa\delta_k^2\\
& = v \theta_{k+1}(2\kappa-\eta_1\eta_2)\delta_k^2+(1-v)(\gamma^2-1)\delta_k^2.
\end{align*} 
We observe that $2\kappa-\eta_1\eta_2 < 0$ by \eqref{eq:kappa_condition} and $\theta_{k+1}=\theta_{k+1}^t\geq \underline{\theta}$ due to the acceptance condition \eqref{eq:accept3} holding for successful iterations.  Then, by recalling the definition of $C_3$ in \eqref{eq:C1}, we get
\begin{equation}\label{eq:phi_diff_9}
\psi_{k+1}-\psi_k\leq 
(-vC_3+(1-v)(\gamma^2-1))\delta_k^2.
\end{equation}
The form of $v$ in \eqref{eq:v_condition} gives 
$vC_3\geq 2\gamma^2(1-v)$ and consequently $vC_3\geq 2(\gamma^2-1)(1-v)$, then  \eqref{eq:phi_diff_9} leads to
\begin{equation*}
\psi_{k+1}-\psi_k\leq -(1-v)(\gamma^2-1)\delta_k^2.
\end{equation*}
If $k$ is unsuccessful, then the sufficient decrease of $\psi_{k+1}-\psi_k$ is guaranteed by \eqref{eq:unsucc_2}. Since $(\gamma^2-1)/\gamma^2<\gamma^2-1$, we conclude that \eqref{eq:unsucc_2} holds for  all true iterations.

If  iteration $k$ is false and  successful,  (\ref{eq:descent})  and $\|\nabla f(x_k)\|< \zeta \delta_k$ give
\begin{align}\label{eq:bound_lemma_2}
f(x_{k+1})-f(x_k)&
\leq \left(\zeta+L\right)\delta_k^2.
\end{align} 
Therefore,  \eqref{eq:phi_diff}, \eqref{eq:bound_h} and \eqref{eq:bound_lemma_2} yield to
\begin{align}\label{eq:phi_diff_10}
\psi_{k+1}-\psi_k
& \leq v(\theta_{k+1}(L+\zeta)\delta_k^2+(1-\theta_{k+1})\mu\delta_k^2)+(1-v)(\gamma^2-1)\delta_k^2\nonumber\\
&\leq (vC_4+(1-v)(\gamma^2-1))\delta_k^2,
\end{align}
where the second inequality follows from the fact that $0<\theta_{k+1}\leq 1$, the hypothesis $\mu\leq \kappa \bar{\mu}(\pi^{\frac{1}{4}})\leq \kappa \bar{\mu}_{up}$ and the definition of $C_4$ given in \eqref{eq:C1}.

If $k$ is unsuccessful, then \eqref{eq:unsucc_2} holds, and thus $\psi_{k+1}-\psi_k<0$. 
Since the right-hand side of \eqref{eq:phi_diff_10} is non-negative, we conclude that inequality \eqref{eq:phi_diff_10} holds    for all false iterations.

By taking the expected value of $\Psi_{k+1}-\Psi_k$ conditioned to the past and the event $\|\nabla f(X_k)\|< \zeta\Delta_k$, and combining \eqref{eq:unsucc_2} with \eqref{eq:phi_diff_10}, we obtain
\begin{align*}\label{eq:phi_diff_11}
& \Ek[\Psi_{k+1}-\Psi_k\, |\, \|\nabla f(X_k)\|<\zeta\Delta_k]\\
& \qquad \leq \pi(1-v)\left(\frac{1-\gamma^2}{\gamma^2}\right)\Delta_k^2+(1-\pi)(vC_4+(1-v)(\gamma^2-1))\Delta_k^2\nonumber\\
&\qquad =(\pi\pm 1)(1-v)\left(\frac{1-\gamma^2}{\gamma^2}\right)\Delta_k^2+(1-\pi)(vC_4+(1-v)(\gamma^2-1))\Delta_k^2\nonumber\\
&\qquad =(1-v)\left(\frac{1-\gamma^2}{\gamma^2}\right)\Delta_k^2+(1-\pi)\left(vC_4+(1-v)\left(\frac{\gamma^4-1}{\gamma^2}\right)\right)\Delta_k^2.
\end{align*}
Using (\ref{eq:v_condition}), the choice of $\pi$ in \eqref{eq:pi_condition} gives
$(1-\pi)\left(vC_4+(1-v)\left(\frac{\gamma^4-1}{\gamma^2}\right)\right)\leq \frac{1}{2}(1-v)\left(\frac{\gamma^2-1}{\gamma^2}\right)$ and the claim follows.
\end{proof}

We conclude by deriving the proof of Theorem \ref{teo_limidelta0}.
\vskip 5pt
\begin{proof}[Proof of Theorem \ref{teo_limidelta0}]
By combining \eqref{eq:phi_diff_8} and \eqref{eq:phi_diff_12} and observing that $(\gamma^2-1)/(2\gamma^2)<\gamma^2-1$, we  obtain
$$
\Ek[\Psi_{k+1}-\Psi_k]\leq -\frac 1 2 (1-v)\left(\frac{\gamma^2-1}{\gamma^2}\right)\Delta_k^2,
$$
which means that inequality \eqref{eq:suff_decrease} holds with   $\sigma = \frac 1 2 (1-v)(\gamma^2-1)/\gamma^2$.

In order to prove \eqref{eq:delta_summable}, we sum \eqref{eq:suff_decrease} over $k=0,\ldots,K$ to obtain
\begin{equation*}
\sum_{k=0}^K\sigma\Delta_k^2\leq \sum_{k=0}^K\Ek[\Psi_k-\Psi_{k+1}].
\end{equation*}
By taking the total expected value, using the linearity of expectation and the law of total expectation, we obtain 
\begin{align*}
\mathbb{E} \left[\sum_{k=0}^K\sigma\Delta_k^2\right]
&\leq\sum_{k=0}^{K}\mathbb{E}\left[\Ek[\Psi_k-\Psi_{k+1}]\right]\\
&= \mathbb{E}\left[\sum_{k=0}^{K}(\Psi_k-\Psi_{k+1})\right]\\
&\leq \mathbb{E}[\Psi_0-\Psi_{low}], 
\end{align*}
where the last inequality is due to the fact that $\{\Psi_k\}$ is bounded from below by a constant $\Psi_{low}$ for all $k\geq 0$, thanks to Assumption \ref{ass_fh}(i) and (iii). By taking the limit for $K\rightarrow \infty$ and applying the monotone convergence theorem on the left-hand side of the inequality, we get
$\mathbb{E}\left[\sum_{k=0}^{\infty}\Delta_k^2\right]<\infty$, from which \eqref{eq:delta_summable} immediately follows.
\end{proof}

\bibliographystyle{amsplain}

\end{document}